\documentclass[aop]{imsart}

\RequirePackage{amsthm,amsmath,amsfonts,amssymb}
\RequirePackage[numbers]{natbib}
\RequirePackage[colorlinks,citecolor=blue,urlcolor=blue,pdfborder={0 0 0}]{hyperref}
\RequirePackage{graphicx}
\usepackage{arydshln} 

\startlocaldefs
\numberwithin{equation}{section}
\theoremstyle{plain}

\newtheorem{theorem}{Theorem}[section]
\newtheorem{lemma}[theorem]{Lemma}
\theoremstyle{definition}
\newtheorem{definition}[theorem]{Definition}

\endlocaldefs

\begin{document}

\begin{frontmatter}
\title{THEORY OF UNCERTAIN PROBABILITY:\\ 
CAN WE DERIVE THE PROBABILITY DENSITY FUNCTION OF UNCERTAIN RANDOM EXPERIMENTS 
WITH CONTINUOUSLY CHANGING CONDITIONS?}
\runtitle{THEORY OF UNCERTAIN PROBABILITY}

\begin{aug}
\author[A]{\fnms{Xiaolin}~\snm{Gong}\ead[label=e1]{gxiaolin@sdu.edu.cn}}
\address[A]{Shandong University\printead[presep={,\ }]{e1}}
\end{aug}

\begin{abstract}
This paper aims to explore the formation mechanism of probability distribution in situations where the differences among random experiments are distinguishable, and these differences continue to evolve along with the dynamic changes in conditions and their mechanisms of action. To this end, we are motivated to devise a new theoretical system---theory of uncertain probability (TUP) with Kolmogorov’s system and nonlinear theories as special cases. TUP develops a novel model that integrates probability and uncertainty as well as the known and unknown to more accurately depict numerous typical random phenomena under more realistic assumptions, and thus provides appropriate tools for greater variety of real needs. It also allows for pioneering interpretation of the causal mechanisms underlying many important distributional characteristics and incorporation of pathwise property to distribution model.
\end{abstract}

\begin{keyword}[class=MSC]
\kwd[Primary ]{60E05}
\kwd{62H05}
\kwd[; secondary ]{60A86}
\end{keyword}

\begin{keyword}
\kwd{probability distribution}
\kwd{uncertainty}
\kwd{distributional characteristics}
\kwd{random experiment}
\end{keyword}

\end{frontmatter}


\section{Introduction}
The following is a short justification for the development of a new probability theory to provide suitable tools for the broader range of needs of the natural and social sciences. According to William Feller \cite{R7}, idealization is indispensable for devising the modern theory of probability and also a standard procedure in other mathematical disciplines. Specifically, the sample space together with the probability distribution assigned in it defines the idealized experiment. And this idealized experiment is described by Kolmogorov \cite{R14} to take place with a complex of conditions, which allows any number of repetitions. Although this idealization practice does not affect the application of the abstract models in many diverse fields, it does hinder their applicability in numerous very important fields especially those of the social sciences. Moreover, nonlinear theories intended to deal with uncertainty develop models with arbitrarily assumed and thus controversial bounds, rather than delving deeply into the underlying issues of actual experiments and their conditions. Although the above theories have already been used in the social sciences, controversy has never ceased. And unsatisfactory, unpleasant empirical evidence, with failure in risk management leading to the 2007-2008 global financial crisis as the most prominent instance, has been constantly supporting the opposing views. The real conditions of social sciences obviously do not allow for any number of repetitions or varying distributions that are theoretically or arbitrarily assigned (e.g., \cite{R3}). At the same time, we are aware that similar theoretical inefficiencies exist philosophically and in practice in the natural sciences.

As well-known from extensive literature (e.g., \cite{R10, R18}), revealing the complexity of deterministic phenomena and the statistical determinacy of accidental phenomena is a main research interest and goal of Kolmogorov during his later years with a basic idea that there is actually no true boundary between the ordered realm and the accidental realm, and the mathematical world is essentially an indivisible whole. Specifically, as to the frequently observed characteristics of distribution, pathwise properties, and features of data series, do they occur by chance or have some degree of determinacy? Furthermore, if both of the two contradictory elements, the deterministic and accidental, act together, then how can they be combined into a single theory of probability?

In sum, we are motivated to address the following fundamental problem: in practice, the conditions in many random experiments are non-repetitive. Moreover, the most complex scenario occurs when the experimental conditions and their mechanisms of action continuously change.  However, the experimental results still show some probability‐-like patterns. Such issues require analysis, but all current theories and models are found to be inadequate. Changes in experimental conditions and their mechanisms of action are the source of probabilistic uncertainty. The solution of the problem is to figure out how to integrate probability with uncertainty, merge known with unknown, and combine ordered realm with accidental realm.

\section{Preliminaries}

We believe that most of us may agree that an appropriate degree of idealization is necessary, useful, and acceptable, but the overload of it is hard to accept. Thus, with the new theory of probability, we do not analyze purely conceptual random experiments. Depending on how different they are from each other, experiments can be divided into the following 4 categories: 

\begin{longlist}
\item
Indistinguishable random experiments

It is impossible to distinguish among the experiments or the cost of distinguishing is too high, not worthwhile, or unnecessary. Examples are: drawing balls from an urn, tossing a die with really hard texture, observing pollen under diffusion at almost constant temperature, etc. It has proved that Kolmogorov system of probability (henceforth denoted by KSP) is applicable and effective in such cases.

\item
Marginally distinguishable random experiments

Without deep philosophical discussions, we are aware that repetitive situations are not typical in real life and there are often somewhat differences among experiments. According to the degree of precision required for the probability distribution, the purpose of the study and the application objectives, we may choose to ignore these marginally distinguishable differences or treat them as the following category of random experiments.

\item
Partially distinguishable random experiments (PDREs) \footnote{Details about PDREs are given in 4.1..}

In both the natural and social sciences, there are a lot of scenarios with partially distinguishable experiments. Especially, as studies shifts to traditional social science subjects like decision-making, \footnote{Decision-based actions and their results, such as purchases and the quantities of purchases made, are not eligible for study as random experiments because they appear to be essentially identical and are easily and mistakenly classified as indistinguishable or marginally distinguishable experiments. And this is one of the reasons why KSP is applied so rashly to the social sciences and leads to so many unsatisfactory applications.}  we can certainly identify the differences and even the patterns of the differences among experiments where, as Ludvig von Mises (\cite{R21}) who is the founder of the "neo-Austrian School" of economics noted, teleology is the prominent rule. \footnote{What is especially special about scenarios in social science is that we can not only explore individual differences by general methods such as observation and analysis, but also obtain more detailed information by means of communication, which is not normally possible in natural science. And just because we can get more information about each individual, and therefore can more easily and clearly distinguish one from another, repetitive experiments are more impossible.} And Frank Knight (\cite{R11}), the founder of Chicago school of economics, notes that this form of probability is involved in the greatest logical difficulties of all. In the natural sciences, except for those with conditions strictly controlled in laboratories, \footnote{Similar exception includes the exceedingly narrow domain of experience connected with games of chance.} it’s not difficult to recognize some fundamental and/or influencing conditions/factors that differentiate the experiments. Thus, the applicability of KSP is dampened in scenarios either dominated by the social laws or by many natural laws. Furthermore, with knowledge accumulation, we know more and more about what the specific causes are, how they make the conditions non-repetitive which further affect the observed outcomes of the experiments and/or induce the probabilities far from arbitrarily assumed distributions, the bounds of expectations, etc. Hence, the applicability of the nonlinear theories is also limited although they have tried to take into account distinguishable differences by assuming different distributions. 

However, needless to say in the natural sciences and even in social settings, the experiments are at most partially distinguishable since humans have a dual nature of biology and sociality. Among many convincing instances of the biological attributes of human, are experiments like selecting a random sample of 50 people and observing the number of left-handers. We know that KSP is very effective in these instances. Especially, biological basis cannot be divorced from any mental, psychological, emotional, and even spiritual activities which are distinguishable to different extents especially through the help of communication. Then, the problem becomes how should we deal with the distinguishable part together with the indistinguishable part in an indivisible whole theory. \footnote{We will not discuss other interpretations of probability, such as subjective probability, but it can be seen later that our model can deal with subjective probability in a more objective way.}

\item
Fully distinguishable experiments

As is widely recognized, other theories, not probability theories, should be used to derive the laws in these cases with fully distinguishable experiments.

Therefore, the new probability theory is based on the analysis of the most complex but general scenarios with partially distinguishable random experiments. And as shown below, the new theory could easily degenerate into KSP, it can express the subadditivity condition behind ambiguity aversion in the theory of capacities in a more comprehensive way, the nonlinear expectation theories actually focus on its special intermediate states, and the nonlinear probability theories constitute (discontinuous) partial modeling of it. Moreover, the new theoretical framework can effectively integrate probability and uncertainty into a coherent whole theory without making implausible assumptions and is therefore referred to as theory of uncertain probability (henceforth denoted by TUP).

\end{longlist}

\section{Definitions and notation}

\begin{definition}
As the basic notion in TUP, PDRE has outcomes that are even more difficult to predict in advance since PDREs are performed under specifically varying conditions, but are nevertheless still subject to analysis.
\end{definition}

\begin{definition}
If the set of possible outcomes of a $PDRE_n$ is a sample space $\Phi_{n}$, then the set of all possible outcomes of the relevant $PDRE_{n}$, $n=1, 2, \ldots$ is the union of sample spaces $\Phi_{n}$, $n=1, 2, \ldots$, arising individually in an uncertain manner, denoted by $\Pi$, called an uncertain sample space and equal to

\begin{equation}
\begin{aligned}
\bigcup_{n=1}^{\infty}\Phi_{n}.
\end{aligned}
\end{equation}
\end{definition}

\begin{definition}
Let $\boldsymbol{\pi}$ be an uncertain sample point of $\Pi$. We say that a set of uncertain sample points is an uncertain random event denoted by capital letters such as $\mathfrak{A}$, $\mathfrak{B}$, $\ldots$.

Specifically, in the continuous setting, \footnote{It is clear that all the following theoretical derivations in the paper also apply to scenarios with $\Phi_{n}$, $n=1,2, \ldots$, and $\Pi=\bigcup_{n=1}^{\infty}\Phi_{n}$.} if the set of possible outcomes of $PDRE_{\gamma}$ is sample space $\Phi_\gamma$, then the set of all possible outcomes of the relevant PDREs is 

\begin{equation}
\begin{aligned}
\Pi=\bigcup_{\gamma\in\Lambda}\Phi_{\gamma}=\left\{\varphi|\exists\gamma\in{\Lambda}\ \text{such that}\ {\varphi\in\Phi_{\gamma}}\right\}.
\end{aligned}
\end{equation}

Where $\Lambda$ is an index set, $\left\{\Phi_{\gamma}:\gamma\in\Lambda\right\}$ is a family of sets, and $\varphi$ is an outcome in $\Phi_{\gamma}$.

It is obvious that if $\Phi_{\gamma}=\mathcal{R}$, $\Pi=\mathcal{R}$, it seems that differentiating among sample spaces is not necessary. Nevertheless, it is indispensable for all the other scenarios as well as for the universality of the theory and the consistency of the theoretical framework. 

Moreover, let $V_{\gamma}$ be the random variable on $\Phi_{\gamma}$ and designated as random variable of uncertain experiment. 

\begin{equation}
\begin{aligned}
    & F_{V_{\gamma}}(\mathrm{v_{\gamma}})=\int_{-\infty}^{\mathrm{v_{\gamma}}}{f_{V_{\gamma}}\left(v_{\gamma}\right)}dv_{\gamma} \\
    & \mathcal{P}_{V_{\gamma}}(v_{\gamma})=f_{V_{\gamma}}(v_{\gamma})
\end{aligned}
\end{equation}

$F_{V_{\gamma}}$ and $f_{V_{\gamma}}$ are the distribution function and the probability density function of $V_{\gamma}$, respectively. Only in KSP scenarios, are  $V_{\gamma}$, $F_{V_{\gamma}}$ and $f_{V_{\gamma}}$ identical across different $\gamma$.
\end{definition}

\begin{definition}
As the set of uncertain random events, $\mathbb{F}$ is a $\sigma$-field composed of all subsets of $\Pi$. 
\end{definition}

\begin{definition}
Given a measurable space $\left(\Pi,\mathbb{F}\right)$, the set function $\mathbb{P}$ on $\mathbb{F}$ is a measure with

\begin{longlist}
\item $\mathbb{P}\left(\mathfrak{A}\right)\geq\mathbb{P}\left(\emptyset\right)=0$ for all $\mathfrak{A}\in\mathbb{F}$;
\item 
$\mathbb{P}$ is $\sigma$-additive;
\item 
$\mathbb{P}=\mathcal{P}_{\mathbb{X}}\left(\mathcal{P}_{V_{\gamma}}\left(v_{\gamma}\right)\right)$. 
\end{longlist}

It will be proved that $\mathbb{P}\left(\Pi\right)=1$, and  is referred to as a measure of uncertain probability.   
\end{definition} 

\begin{definition} 
The triple $\left(\Pi,\mathbb{F},\mathbb{P}\right)$ is called an uncertain probability space.
\end{definition}

\begin{definition} 
If $\left(\mathcal{U},\mathfrak{U}\right)$ is an arbitrary measurable space, a function $\mathbb{X}$: $\Pi\rightarrow\mathcal{U}$ is a measurable map from $\left(\Pi, \mathbb{F}\right)$ to $\left(\mathcal{U},\mathfrak{U}\right)$ if 

$\mathbb{X}^{-1}(U)\equiv\left\{\boldsymbol{\pi}:\mathbb{X}(\boldsymbol{\pi})\in U\right\}\in\mathbb{F}$ for all $U\in\mathfrak{U}$.

If $(\mathcal{U},\mathfrak{U})=(\mathcal{R},\mathfrak{R})$, $\mathbb{X}$ is called an uncertain random variable.
\end{definition}

\begin{definition}
The uncertain distribution of an uncertain random variable is the function $\mathfrak{F}_{\mathbb{X}}:\ \mathcal{R}\rightarrow\left[0, 1\right]$ defined by
\begin{equation}
\begin{aligned}
\mathfrak{F}_{\mathbb{X}}\left(\mathrm{x}\right)=\mathbb{P}\left(\mathfrak{x}\le\mathrm{x}\right),\mathrm{x}\in\mathcal{R}.
\end{aligned}
\end{equation}

When $\mathfrak{F}_\mathbb{X}\left(\mathrm{x}\right)$ is displayed as
\begin{equation}
\begin{aligned}
\mathfrak{F}_\mathbb{X}\left(\mathrm{x}\right)=\int_{-\infty}^{\mathrm{x}}{\mathfrak{f}_{\mathbb{X}}\left(\mathfrak{x}\right)}d\mathfrak{x},
\end{aligned}
\end{equation}

we say that $\mathfrak{f}_{\mathbb{X}}$ is the uncertain probability density function (UPDF) of $\mathbb{X}$.
\end{definition}

\begin{definition}
The expectation of a continuous uncertain random variable $\mathbb{X}$ is defined by
\begin{equation}
\begin{aligned}
\mu_\mathbb{X}=E(\mathbb{X})=\int_{-\infty}^{\infty}\mathfrak{x}\mathfrak{f}_{\mathbb{X}}(\mathfrak{x})d\mathfrak{x}.
\end{aligned}
\end{equation}

Let $\mathbb{X}$, $\mathbb{Y}$ be uncertain random variables and $E\left|\mathbb{X}\right|$, $E\left|\mathbb{Y}\right|<\infty$, then

\begin{longlist}
\item 
$E\left(\mathbb{X}+\mathbb{Y}\right)=E\left(\mathbb{X}\right)+E\left(\mathbb{Y}\right)$;
\item 
$E\left(a\mathbb{X}+b\right)=aE\left(\mathbb{X}\right)+b$ for any real numbers $a$, $b$;
\item 
$E\left(\mathbb{X}\right)\geq E\left(\mathbb{Y}\right)$, if $\mathbb{X}\geq\mathbb{Y}$.
\end{longlist}
\end{definition}

\begin{definition}
The variance of an uncertain random variable $\mathbb{X}$ is defined by
\begin{equation}
\begin{aligned}
\sigma_{\mathbb{X}}^2=Var\left(\mathbb{X}\right)=E\left[\left(\mathbb{X}-E(\mathbb{X})\right)^2\right].
\end{aligned}
\end{equation}

Its nonnegative square root denoted by $\sigma_{\mathbb{X}}$ is the standard deviation of $\mathbb{X}$. If $z$ is a positive integer, then $E\left(\mathbb{X}^z\right)$ is called the $z$th moment of $\mathbb{X}$.
\end{definition}

\section{The theory and model}

To sum up once more, our problem is that establishing the premises required by Kolmogorov \cite{R14} by "assuming a complex of conditions, $\mathfrak{S}$, which allows of any number of repetitions" is frequently distant from reality. Meanwhile, it is difficult to find exact and convincing correspondence in practice for the presumed upper and lower bounds in the nonlinear expectation theories or it is hard for the nonlinear probability theories to scale up to an infinite number of heterogeneous experiments. Thus, in the following, we begin by outlining the main facts regarding most real random experiments.

\begin{longlist}
\item The conditions differ and thus distinguish experiments.

As is generally known, probability problems may involve a total of $w$ objects of study. When $w>1$, the noticeable differences among them may render the most popular method of conducting experiments with individual objects inappropriate. And only in special cases, do nonlinear theories seem applicable, though remains incapable of overcoming the aforementioned theoretical limitations. If $w>1$, common examples are similar to those agreed on by both Knight \cite{R11} and von Mises \cite{R21} when explaining why the frequency interpretation is not applicable to economics, or even to social science. Moreover, comparable problems often arise in natural science.

If w=1, typical examples include a coin, a die, etc. Here, this one item is the study object of a series of experiments. The ideal experimental environment for KSP is known to exist if the fundamental and influencing conditions with the object remain unchanged. However, the premises required for KSP are not established if the object undergoes fundamental changes and/or varying influences that affect the outcomes of the experiments. In this phase, the nonlinear theories capture changes in the probability distribution at the observable level, rather than at more profound levels that affect the outcomes of experiments. Since they focus on the phenomenal level, a makeshift solution is to approach the problem by making unrealistic assumptions such as bounds.

\item The fact that conditions constantly change adds to the complexity of the problem. 

\item Experiments are also distinguished by the ways that conditions act on the outcomes, which varies depending on the conditions and/or research objects and may also constantly change.

Then, there arises the fascinating question of whether these multiple changes taking place simultaneously exhibit any noteworthy patterns or statistical determinacy.

\end{longlist}

\subsection{Details about partially distinguishable random experiments}

To solve the above problem, we must provide more details about what the prerequisites for PDREs are, and what the stimulating role of "partially distinguishable" is going to be.

\begin{longlist}
\item 
Compared to the repeated experiments in KSP, PDREs only have similarities with one another, which is more true-to-life. 
\item 
Compared to the nonlinear theories, TUP delves deeper into the particular conditions of random experiments. The conditions or factors, here denoted by $X$, that act on the outcomes may distinguish one experiment from another.
\item 
There may be a considerable number of conditions that affect $\varphi$ and thus $V_{\gamma}$, the random variable on $\Phi_{\gamma}$.
\item 
The conditions could generally be classified as either fundamental or influencing, denoted by $X_{m}$ and $X_{l}$ respectively. \footnote{Certain conditions possess both fundamental and influencing characteristics. Psychology is a typical illustration. These conditions can be further divided into two categories of more precise conditions to be handled. When psychology is taken as an example, social components are influencing factors or conditions, while genetic components are fundamental causes or conditions.}
\item 
$X_{m}$ is causing factor of the contingency/randomness or substantial to the distribution of $V_{\gamma}$. \footnote{Especially when fundamental conditions are also intrinsic conditions, as is the case in many situations.}

Since the fundamental conditions are usually present at physical and physiological level where KSP is normally applicable, it is logical to presume that the conditions of most fundamental conditions are difficult to distinguish among all the research objects in a PDRE if $w>1$ or among PDREs if $w=1$. \footnote{As also mentioned later, the conditions of $X_{m}$, the conditions of these conditions, and so forth, may extend down to the level of substance formation, where probability distributions and indistinguishability are predominant. Moreover, we know that it is extremely challenging for us to accurately understand the true differences among the fundamental conditions, even in social science where communication is available, due to the influence of individuals’ subjective judgement, expression, etc.} Thus, it can be reasonably inferred that $X_{m}$ may be random variable defined on $\left(\Omega,F,P\right)$ of KSP and follow certain distributions, and thus denoted by $X_{mi}$ with $i$ representing indistinguishable since, even though we know that $X_{mi}$ may follow a certain distribution, it's still hard to determine its value of any individual research object in each experiment when $w>1$ nor its value in any experiment when $w=1$. 

Although most of the fundamental conditions are indistinguishable in the abovementioned way, we may not completely rule out any possibility of distinguishable fundamental conditions, denoted by $X_{md}$ with $d$ representing distinguishable.
\item 
$X_{l}$ is influencing condition that impacts on the distribution of $V_{\gamma}$.

The conditions of $X_{l}$ may be indistinguishable or incomprehensible as $X_{mi}$, and thus $X_{l}$ could be random variable, represented by $X_{li}$. $X_{ld}$ is influencing and distinguishable condition. \footnote{We can further explore whether $X_{mih}$, $h=1, 2, \ldots, H$, $X_{mdj}$, $j=1, 2, \ldots, J$, $X_{liq}$, $q=1, 2, \ldots, Q$, $X_{likr}$, $r=1, 2, \ldots, R$, $X_{lds}$, $s=1, 2, \ldots, S$, and $X_{ldkt}$, $t=1, 2, \ldots, T$, are independent to facilitate related extensional research, but this paper does not address it for now. Details of the above notation are given in the following text.}
\item 
Let $C_{mi}$ and $C_{md}$ indicate how $X_{mi}$ and $X_{md}$ impact $V_{\gamma}$. Correspondingly, $C_{li}$ and $C_{ld}$ signify the means how $X_{li}$ and $X_{ld}$ influence $V_{\gamma}$.
\item 
As is readily deduced, if $X_{li}$ and $X_{ld}$ are also extrinsic conditions, a lot of them might exercise their influence respectively through intrinsic media $K_{i}$ and $K_{d}$, then $E_{ki}$ and $E_{kd}$ which indicate the effects of $K_{i}$ and $K_{d}$ further transmit the changes to $V_{\gamma}$, and then the influences of $X_{li}$ and $X_{ld}$ are signified as $C_{lik}$ and $C_{ldk}$.Whereas, if $X_{li}$ and $X_{ld}$ exert their influence directly, $C_{li}$ and $C_{ld}$ symbolize the influence mechanism.
\item 

Hence, “partially distinguishable” means: the differences among PDREs should not be substantial and thus they share a same group of $X_{m}$, $X_{l}$, $C_{m}$, $C_{l}$, $C_{lk}$, $K$, and $E_{k}$, some of which might have distinguishable differences among experiments, where $C_{m}=C_{mi}\cup C_{md}$, $C_l=C_{li}\cup C_{ld}$, $C_{lk}=C_{lik}\cup C_{ldk}$, $K=K_{i}\cup K_{d}$, and $E_{k}=E_{ki}\cup E_{kd}$.
\end{longlist}

Specifically, Table \ref{tab1} in Appendix \ref{appA} summarizes all possible types of PDREs when $w=1$, and Table \ref{tab2} in Appendix \ref{appA} lists all possible types of PDREs when $w>1$. In the tables, the first digits of the scenario codes in the first columns represent the numbers of research objects in each experiment. When $w=1$, they are equal to 1. When $w>1$, they are set to $W$.

Initially, we present a few simple examples to illustrate several scenarios that are quite straightforward to understand. Flipping a coin is a typical case representing Table \ref{tab1} scenario 1.ii.iiii where the conditions for a fundamental condition, $X_{m}$, such as the uniformity of the coin, are indistinguishable and thus $X_{m}$ might be a random variable or constant in all experiments. $C_{m}$, the way this $X_{m}$ works on the outcomes is also indistinguishable, and the effect of influencing condition $X_{l}$, such as the flow of air (of course, in the absence of cheating), could be considered ineffective or repetitive. A case representing Table \ref{tab2} W.ii.iiii or W.ii.diii could be to observe the movement of pollen in water while trying to control the water temperature (we know that the actual temperature is rarely completely constant). Upon careful analysis, it can be seen that the other scenarios in the tables are all situations that we may encounter in our research and practice.

In particular, it is necessary to clarify why  $C_{md}$, $C_{li}$, $C_{lik}$, $C_{ld}$ and $C_{ldk}$ could be distinguishable or indistinguishable, which means that the subscripts of these $C$s, $C=C_{m}\cup C_
{l}$, indicate the corresponding relation between $C$ and $X$, not the attributes of $C$s themselves. Moreover, it should also be made clear that whether $C_{md}$, $C_{li}$, $C_{lik}$, $C_{ld}$, and $C_{ldk}$ contain distinguishable differences or not has no bearing on how we perceive their functional forms.

First, $C_{md}$ is where we can start. It corresponds to any possible distinguishable fundamental condition $X_{md}$. On the one hand, for different values of the fundamental condition, the functional form may not change, although, of course, the values of the function do. However, its functional form may vary according to different values of the fundamental condition.

Second, in respect of $C_{li}$, it is different from $C_{mi}$ indicating that the indistinguishable fundamental condition $X_{mi}$ matches the equally indistinguishable fundamental mechanism of action. In this instance, when the conditions of $X_{l}$ are indistinguishable, $X_{l}$ may follow a distribution and only one realization of $X_{li}$ exserts impact in each PDRE. Although $X_{li}$ is indistinguishable in different experiments, it may have different effects on different research objects in a PDRE if $w>1$. In contrast, $C_{li}$ may also naturally suggest that $X_{li}$ has an indistinguishable impact on research objects. However, being indistinguishable does not necessarily mean being identical. It may consist of a collection of functions that are similar to each other, or the functions may include a disturbance term. Thus, $C_{li}$ can be distinguishable or indistinguishable. Then, the reasoning behind $C_{lik}$, $C_{ld}$, $C_{ldk}$, $E_{ki}$, and $E_{kd}$ is comparable to that of $C_{md}$ and $C_{li}$. \footnote{Moreover, the subscript of $K_{i}$ also denotes the matching relation between it and $X_{li}$ but not the attribute of it. When $X_{li}$ is indistinguishable and $C_{li}$ is the opposite, one can easily deduce that $K_{i}$ may be distinguishable.}

Finally, it is particularly important to emphasize the following points. We may not rule out the possibility that, under certain circumstances, a fundamental condition and/or its mechanism of action are distinguishable to varying degrees. If so and when $w>1$, it shall not be a determining factor that dominates the outcomes of the experiments. Otherwise, it can be inferred that the probability will not converge, as in the example of milestones given by von Mises \cite{R22}. \footnote{Hence, we have also shed more light on situations in which the classical probability theory is not applicable.}

Remarkably, the above requirement is not a strong presumption but rather a reasonable setting. The relevant arguments are as follows. Due to the properties of fundamental conditions, $X_{md}$ does not exist at all or only exists in rare cases. $X_{m}$ is the fundamental condition of the experiments, and $X_{m}$ has conditions of its own. By extension, this ultimately leads to the fundamental physical or physiological layer of substance formation, where indistinguishability is a crucial characteristic, as we know. And, as evident from the construction logic of the TUP model in the following text, although the logical hierarchy in the article consists of only 4 layers, the number of layers can be increased to allow for further breakdown until an indistinguishable level is reached. Thus, most, if not all, of fundamental conditions may follow certain distributions. This allows us to know that the combined effects of fundamental conditions with indecisive $X_{md}$ could be random variables. That is

\begin{equation}
V_{\gamma} = g(X). \footnote{$g$ is Borel measurable, which will not be restated hereafter.}
\end{equation}

Furthermore, even if certain fundamental mechanisms of action can be distinguished, in essence, the differences are certainly not weighty and do not change the fact that $g(X)$ may be a random variable. Then, although $X_{l}$ can cause a distinguishable difference in $K$, $K$, essentially as fundamental conditions, and its mechanism of action that might similarly be distinguishable, also cannot change the possible status of $g(X)$ as a random variable. The relevant argument will be more complete when we explain in detail how $X_{l}$ will work in Section 4.2.2..

In actuality, we know that we come across these kinds of partially distinguishable random experiments more frequently, and corresponding analysis requires theoretical tools. That is exactly why we are thinking about coming up with TUP.

\subsection{Formation of uncertain probability density function (UPDF)}

The following succinctly describes the logic behind the formation of UPDF. Every experiment may be unique due to constant changes in fundamental, influencing conditions and/or their mechanisms of action. As a result, $\Phi_{n}$ or $\Phi_{\gamma}$ might vary from experiment to experiment. Meanwhile, the probability distribution of the random variable $V_{n}$ on $\Phi_{n}$ or $V_{\gamma}$ on $\Phi_{\gamma}$ also varies. Thus, a realization value of a distinct $V_{n}$ or $V_{\gamma}$ is a realization of $\mathbb{X}$ in each experiment. In what follows, we will focus on the continuous setting to examine the specifics of the formation of UPDF of $\mathbb{X}$.

\subsubsection{Effect of fundamental conditions on random variables of uncertain experiments}

\begin{longlist}
\item[1.]
Effect of $X_{mi}$ on $V_{\gamma}$. 

Specifically, $C_{mih}$, $h=1,2,\ldots,H$, show how $X_{mih}$, $h=1,2,\ldots,H$, impact $V_{\gamma}$. 

\item[2.]
Effect of $X_{md}$ on $V_{\gamma}$.

$C_{mdj}$, $j=1,2,\ldots,J$, specify how $X_{mdj}$, $j=1,2,\ldots,J$, effect $V_{\gamma}$. 

As previously mentioned, if necessary, each $C_{mdj}$ could represent a set of functions, $C_{mdj}=\left\{c_{{mdj}_{1}},\ c_{m{dj}_{2}}, \ldots\right\}$, which may somehow different with each other for different research objects in a PDRE if $w>1$ or among PDREs if $w=1$. 

\item[3.]
Aggregate Effect of $X_m$ on $V_{\gamma}$.

Then, the collective effect of all $X_{mih}$, $h=1,2,\ldots,H$, and $X_{mdj}$, $j=1,2,\ldots,J$, on $V_{\gamma}$ can be regarded as a function below: 

\begin{equation*}
\begin{aligned}
g_{m}\bigl( & C_{mi1}\left(X_{mi1}\right), C_{mi2}\left(X_{mi2}\right),\ldots, C_{miH}\left(X_{miH}\right), \\ 
& C_{md1}\left(X_{md1}\right), C_{md2}\left(X_{md2}\right),\ldots, C_{mdJ}\left(X_{mdJ}\right)\bigl).
\end{aligned}
\end{equation*}

\end{longlist}

\subsubsection{Effect of influencing conditions on random variables of uncertain experiments}

\begin{longlist}
\item[1.]
Effect of $X_{li}$ on $V_{\gamma}$.
\begin{longlist}
\item
Effect of $X_{li}$ directly on $V_{\gamma}$.

$C_{liq}$, $q=1,2, \ldots,Q$, indicate the means by which $X_{liq}$, $q=1,2, \ldots, Q$, directly affect $V_{\gamma}$.

\item 
Effect of $X_{li}$ on $V_{\gamma}$ acting through $K_i$.

Through the mechanism of $C_{likr}$, $r=1,2, \ldots, R$, $X_{likr}$, $r=1,2,\ldots, R$, may have influence on media $K_{ir}$, $r=1,2,\ldots, R$, and $E_{kir}$, $r=1,2, \ldots, R$, demonstrate the additional transitive impact on $V_{\gamma}$.

\item 
Aggregate effect of $X_{li}$ on $V_{\gamma}$.

Then, the total influence of $X_{li}$ on $V_{\gamma}$ is \footnote{In certain circumstances, if one specific $x_{li}$ exerts influence through multiple intermediate factors, then $K_{ir_{u_r}} = K_{ir_{1}}, K_{ir_{2}}, \ldots, K_{ir_{U_{r}}}$, $u=1, 2, \ldots, U$, represent the various media corresponding to $x_{likr}$. $C_{likr_{u_{r}}} = {C_{likr_{1}}, C_{likr_{2}}, \ldots,  C_{likr_{U_{r}}}}$, and $E_{kir_{u_{r}}} = {E_{kir_{1}}, E_{kir_{2}}, \ldots, E_{kir_{U_{r}}}}$ are how the effects operate. Analogously, the intermediate variables and the associated functions of $x_{ldkt}$ acting on $V_{\gamma}$ are $K_{dt_{u_{t}}} = {K_{dt_{1}}, K_{dt_{2}}, \ldots, K_{dt_{U_{t}}}}$, $C_{ldkt_{u_{t}}} = {C_{ldkr_{1}}, C_{ldkr_{2}}, \ldots, C_{ldk_{U_{t}}}}$ and $E_{kdt_{u_{t}}} = {E_{kdt_{1}}, E_{kdt_{2}}, \ldots, E_{kdt_{U_{t}}}}$, $u=1, 2, \ldots, U$, respectively. Moreover, like $C_{li}$ and $C_{lik}$, of course each $C_{likr_{u_{r}}}$ may be a collection of functions since the ways that $X_{lik}$ impacts $K_{ir_{u_{r}}}$ of various research objects in a PDRE if $w>1$ or among PDREs if w=1 are not necessarily to be identical. $E_{kir_{u_{r}}}$, $C_{ldkt_{u_{t}}}$, and $E_{kdt_{u_{t}}}$ are comparable cases.}

\begin{equation*}
\begin{aligned}
g_{li}\bigl( & C_{li1}\left(X_{li1}\right), C_{li2}\left(X_{li2}\right),\ldots, 
C_{liQ}\left(X_{liQ}\right), E_{ki1}\left(C_{lik1}\left(X_{lik1}\right)\right), \\
& E_{ki2}\left(C_{lik2}\left(X_{lik2}\right)\right), \ldots, E_{kiR}\left(C_{likR}\left(X_{likR}\right)\right)\bigl).
\end{aligned}
\end{equation*}

However, actually, only one realization of each $X_{li}$, $x_{li}\in X_{li}$, exercises effect on $V_{\gamma}$ in each PDRE. Subsequently, the impact of $X_{li}$ on $V_{\gamma}$ is as follows:
\begin{equation}
\begin{aligned}
g_{li}\bigl( & C_{li1}\left(x_{li1}\right), C_{li2}\left(x_{li2}\right),\ldots, 
C_{liQ}\left(x_{liQ}\right), E_{ki1}\left(C_{lik1}\left(x_{lik1}\right)\right), \\
& E_{ki2}\left(C_{lik2}\left(x_{lik2}\right)\right), \ldots, E_{kiR}\left(C_{likR}\left(x_{likR}\right)\right)\bigl).
\end{aligned}
\end{equation}

What is interesting to see is the ultimate impact of $X_{li}$ on the distribution of $V_{\gamma}$ and furthermore that of $\mathbb{X}$. We will particularly examine the situation when $X_{li}$ is normally distributed. 
\end{longlist}

\item[2.]
Effect of $X_{ld}$ on $V_{\gamma}$.
\begin{longlist}
\item 
Effect of $X_{ld}$ directly on $V_{\gamma}$.

$X_{lds}$, $s=1,2, \ldots, S$, could also directly affect $V_{\gamma}$ through the mechanism of $C_{lds}$, $s=1,2, \ldots, S$.

\item 
Effect of $X_{ld}$ on $V_{\gamma}$ acting through $K_d$.
 
Correspondingly, $C_{ldkt}$, $t=1,2, \ldots, T$, signify how $X_{ldkt}$, $t=1,2, \ldots, T$, influence $K_{dt}$, $t=1,2, \ldots, T$. And $E_{kdt}$, $t=1,2, \ldots, T$, further transmit the changes of $K_{dt}$, $t=1,2, \ldots, T$, to $V_{\gamma}$. 

\item 
Aggregate effect of $X_{ld}$ on $V_{\gamma}$.

Likewise, the following illustrates how a single value of every $X_{ld}$ effects $V_{\gamma}$. 

\begin{equation}
\begin{aligned}
g_{ld}\bigl( & C_{ld1}\left(x_{ld1}\right), C_{ld2}\left(x_{ld2}\right), \ldots,
C_{ldS}\left(x_{ldS}\right), E_{kd1}\left(C_{ldk1}\left(x_{ldk1}\right)\right),\\
& E_{kd2}\left(C_{ldk2}\left(x_{ldk2}\right)\right), \ldots, E_{kdT}\left(C_{ldkT}\left(x_{ldkT}\right)\right)\bigl).
\end{aligned}
\end{equation}

Later, we will also show how the functional form of $X_{ld}$ alters the distribution of $\mathbb{X}$. 
\end{longlist}

\item[3.] 
Aggregate effect of $X_{l}$ on $V_{\gamma}$.

Consequently, the collective influence $g_{l}$ of $X_{l}$ on $V_{\gamma}$ is the combination of (4.2) and (4.3).

\end{longlist}

\subsubsection{Aggregate Effect of fundamental and influencing conditions on random variables of uncertain experiments}

Thus, (4.1) should be expressed as follows:

\begin{equation*}
V_{\gamma} = g(X_{m}, X_{l}) = g(X_{m}, x_{li}, x_{ld}, x_{lik}, x_{ldk}). \footnote{Normally, $X_{m}$ may affect $V_{\gamma}$ in two ways: as a fundamental condition itself, and as a result of influencing conditions. In any particular situation, for instance that $K$ which can be also taken as a fundamental condition only acts on $V_{\gamma}$ in response to $X_{l}$, we might consider the situation as the impact of the corresponding $X_{m}$ equal to 0.}
\end{equation*}

It should also be clear from the preceding discussion that the pattern of impact of $X_{m}$, as a random variable, on $V_{\gamma}$ differs from that of $X_{li}$, $X_{ld}$, $X_{lik}$, and $X_{ldk}$, as specific values in each experiment. Explicitly, $X_{m}$ is the primary determinant that establishes the probability distribution of $V_{\gamma}$. In addition to other influences on $V_{\gamma}$, a main and simplified effects of conditions $x_{li}$, $x_{ld}$, $x_{lik}$, and $x_{ldk}$ manifest themselves as horizontal movements of $V_{\gamma}$ through different functional mechanisms and to varying degrees. \footnote{Although there are many other scenarios, a most straightforward and prevalent example in this case is that, coincidentally, $X_{m}$ is an intrinsic condition, $X_{l}$ is an extrinsic influencing condition, and $X_{l}$ acts through the media $K$. $X_{m}$ is a random variable whose value varies randomly among different research objects when $w>1$ or accidentally among different experiments when w=1. Meanwhile, only one realization value of $X_{l}$ takes effect in each experiment, regardless of whether $X_{l}$ is a random variable or not.}  More interestingly, the distributional and/or functional characteristics of $X_{l}$ are imparted to the distribution of $\mathbb{X}$ through the act on $V_{\gamma}$. For example, the means of a family of random variables $\left\{V_{\gamma }:\gamma \in \Lambda \right\}$, where each $V_{\gamma}$ is defined on the corresponding set $\Phi_{\gamma}$, might normally show a cluster close to $\mu$, if $X_{li}\sim N(\mu, \sigma^2)$. At the same time, we can also investigate how the specific form of $X_{ld}$ affects $\mathbb{X}$. \footnote{Although Einstein's \cite{R8} relative statement pertains to the microscopic world where conditions may be incompatible with the scenarios of general life, it indubitably follows that the distribution of distinguishable variable can be identified since our knowledge of it far outweighs its frequency of occurrence, which applies to $X_{ld}$ as well.} 

\subsubsection{Random variable of uncertain experiment and uncertain random variable}

Each $\mathfrak{x}$, a realization of $\mathbb{X}$, uncertainly originates from a distinct $V_{\gamma}$. That is, one and only one value of every $V_{\gamma}$ uncertainly becomes a realization of $\mathbb{X}$ in each experiment, and each $\mathfrak{x}\in\mathbb{X}$ uncertainly comes from a unique distribution of every experiment. In fact, in a world that is constantly changing, this is exactly what we actually experience. 

\subsubsection{Uncertain probability density function (UPDF)}

\begin{lemma}\label{le}
Exchangeability between random variable of uncertain experiment and uncertain random variable.

\normalfont Since uncertain random variable $\mathbb{X}$ is constructed by uncertainly selecting from any possible $V_{\gamma}$, it indicates that $\mathfrak{x}$ and $v_{\gamma}$ are exchangeable. 
\end{lemma}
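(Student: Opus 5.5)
I will make the statement precise as the claim that, for any outcome $\mathfrak{x}$ of the uncertain experiment, there is an index $\gamma\in\Lambda$ with $\mathfrak{x}=v_\gamma$, where $v_\gamma$ is a value of $V_\gamma$ on $\Phi_\gamma$; and, conversely, every value $v_\gamma$ of every $V_\gamma$ is an admissible realization of $\mathbb{X}$. So the plan is to prove a two-sided correspondence between realizations, resting on the union structure $\Pi=\bigcup_{\gamma\in\Lambda}\Phi_\gamma$ of Definition 3.3.

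\textbf{Forward direction.} Let $\boldsymbol{\pi}\in\Pi$ be an uncertain sample point. By the definition of $\Pi$ there is some $\gamma\in\Lambda$ with $\boldsymbol{\pi}=\varphi\in\Phi_\gamma$. By Section 4.2.4, in each experiment exactly one value of exactly one $V_\gamma$ becomes the realization of $\mathbb{X}$. I will encode this by defining $\mathbb{X}$ on $\Pi$ piecewise as $\mathbb{X}|_{\Phi_\gamma}=V_\gamma$. This gives $\mathfrak{x}=\mathbb{X}(\boldsymbol{\pi})=V_\gamma(\varphi)=v_\gamma$.

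\textbf{Converse direction.} Every $\varphi\in\Phi_\gamma$ lies in $\Pi$, so $V_\gamma(\varphi)$ lies in the range of $\mathbb{X}$. Together with the forward direction, this shows that the set of possible realizations of $\mathbb{X}$ equals $\bigcup_\gamma V_\gamma(\Phi_\gamma)$. Hence a symbol $\mathfrak{x}$ may be replaced by $v_\gamma$ for the appropriate $\gamma$, and conversely. This replacement is the sense of ``exchangeable'' needed later, for example when substituting $v_\gamma$ for $\mathfrak{x}$ inside $\mathcal{P}_{V_\gamma}$ to compose $\mathcal{P}_{\mathbb{X}}(\mathcal{P}_{V_\gamma}(v_\gamma))$.

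\textbf{Main obstacle: well-definedness.} The $\Phi_\gamma$ may overlap; in the extreme case $\Phi_\gamma=\mathcal{R}$ for all $\gamma$. A single point $\varphi$ could then carry different values $V_\gamma(\varphi)$ for different $\gamma$. I would first resolve this by working on the disjoint union $\bigsqcup_\gamma\Phi_\gamma=\{(\gamma,\varphi):\varphi\in\Phi_\gamma\}$, which tags each point with the experiment it came from. This matches the text's statement that ``each $\mathfrak{x}$ uncertainly comes from a unique distribution.''

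\textbf{Measurability.} I also need $\mathbb{X}$ to be measurable in the sense of Definition 3.7. This is immediate here, because $\mathbb{F}$ is taken to be all subsets of $\Pi$. It would require a genuine argument for a smaller $\sigma$-field, such as the one generated by the sets $\{\gamma\}\times B$ with $B$ a Borel set.
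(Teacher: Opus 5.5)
Your piecewise construction, which tags each outcome with its experiment and sets $\mathbb{X}(\gamma,\varphi)=V_\gamma(\varphi)$, is the paper's own justification made explicit. The paper offers nothing beyond the sentence that $\mathbb{X}$ is built by selecting among the $V_\gamma$, and your tagged union is a real improvement on the plain union of Definition 3.3. The gap is in what you let ``exchangeable'' mean.

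The correspondence you prove is that the possible values of $\mathbb{X}$ are exactly $\bigcup_\gamma V_\gamma(\Phi_\gamma)$. This holds by construction for \emph{every} probability on the tagged union. That includes one concentrated on a single $\Phi_{\gamma_0}$, under which $\mathbb{X}$ simply has the law of $V_{\gamma_0}$. Yet the lemma is used in Theorem 4.2 and in (B.2) to rewrite $\mathbb{P}(\mathfrak{x}\le\mathrm{x})$ as $\int_{-\infty}^{\mathrm{x}}\int_{-\infty}^{+\infty} f_{V_\gamma,\mathfrak{M}}(v_\gamma,\mu_{V_\gamma})\,d\mu_{V_\gamma}\,dv_\gamma$, and analogously with the influencing conditions in (4.4). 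That is a distributional statement: the law of $\mathbb{X}$ is the mixture of the laws of the $V_\gamma$ against the law of the selecting index. A correspondence of values cannot deliver it. So your remark that value substitution is ``the sense needed later'' is not right. The same holds for the third clause of Definition 3.5, which concerns how $\mathbb{P}$ is assembled from the $\mathcal{P}_{V_\gamma}$, not which values occur.

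The missing ingredient is a law on the index set. One way to supply it:
\begin{itemize}
\item Identify $\Lambda$ with the range of the influencing conditions (or of $\mathfrak{M}$) and give it their law $\nu$.
\item Realize each $V_\gamma$ canonically, so the tagged union is $\Lambda\times\mathcal{R}$ and $\mathbb{X}(\gamma,v)=v$.
\item Assume $(\gamma,v)\mapsto f_{V_\gamma}(v)$ is jointly measurable.
\item Set $\mathbb{P}(G\times B)=\int_G\int_B f_{V_\gamma}(v)\,dv\,\nu(d\gamma)$ for Borel $G\subseteq\Lambda$ and $B\subseteq\mathcal{R}$.
\end{itemize}
Then $\mathbb{P}(\mathbb{X}\le\mathrm{x})=\int_\Lambda F_{V_\gamma}(\mathrm{x})\,\nu(d\gamma)$. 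Tonelli gives $\mathfrak{f}_{\mathbb{X}}(v)=\int_\Lambda f_{V_\gamma}(v)\,\nu(d\gamma)$, which is the content of (4.5) and (B.4).

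This also overturns your measurability paragraph. The power set cannot carry such a $\mathbb{P}$. For $\mathbb{X}$ to have a density, $\mathbb{P}$ must vanish on singletons. A $\sigma$-additive probability on all subsets of a set of size continuum with that property does not exist under CH (Banach--Kuratowski). In general its existence would force a real-valued measurable cardinal $\le 2^{\aleph_0}$ (Ulam).

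Your alternative, the $\sigma$-field generated by the sets $\{\gamma\}\times B$, is too coarse. Every member either has only countably many nonempty $\gamma$-sections or has a complement that does. So for uncountable $\Lambda$ the event $\{\mathbb{X}\le\mathrm{x}\}=\Lambda\times(-\infty,\mathrm{x}]$ is not even in it. Moreover, an atomless $\nu$ makes $\mathbb{P}$ take only the values $0$ and $1$ there. You need $\mathcal{B}(\Lambda)\otimes\mathcal{B}(\mathcal{R})$, with Borel sets rather than singletons in the first coordinate.
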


\begin{theorem}[UPDF]\label{th}
Hence, we have
\begin{equation}
\begin{aligned}
\mathfrak{F}_\mathbb{X}\left(\mathrm{x}\right) = & \mathbb{P}\left(\mathfrak{x}\leq\mathrm{x}\right) = \int_{-\infty}^{\mathrm{x}}{{\mathfrak{f}_{\mathbb{X}}\left(\mathfrak{x}\right)}}d\mathfrak{x} \\
= & \int_{-\infty}^{v_{\gamma}}\int_{-\infty}^{+\infty}\int_{-\infty}^{+\infty}\int_{-\infty}^{+\infty}\int_{-\infty}^{+\infty} \\
& f_{V_{\gamma}, X_{li}, X_{ld}, {X}_{lik}, X_{ldk}}\left(v_{\gamma}, x_{li}, x_{ld}, x_{lik}, x_{ldk}\right)dx_{li}dx_{ld}dx_{lik}dx_{ldk}dv_{\gamma}.
\end{aligned}
\end{equation}

Then,
\begin{equation}
\begin{aligned}
\mathfrak{f}_{\mathbb{X}}(\mathfrak{x}) = \mathfrak{F}_{\mathbb{X}}^\prime
= & \int_{-\infty}^{+\infty}\int_{-\infty}^{+\infty}\int_{-\infty}^{+\infty}\int_{-\infty}^{+\infty} \\
& f_{V_{\gamma}, X_{li}, X_{ld}, {X}_{lik}, X_{ldk}}\left(v_{\gamma}, x_{li}, x_{ld}, x_{lik}, x_{ldk}\right)dx_{li}dx_{ld}dx_{lik}dx_{ldk}.
\end{aligned}
\end{equation}
\end{theorem}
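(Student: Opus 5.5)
The argument is essentially the law of total probability over the influencing conditions, combined with Lemma \ref{le}. I will model one PDRE as a draw of the realized influencing conditions $(x_{li},x_{ld},x_{lik},x_{ldk})$ together with the value $v_{\gamma}$ of the random variable of uncertain experiment attached to those conditions. The relation
\[
V_{\gamma}=g(X_{m},x_{li},x_{ld},x_{lik},x_{ldk})
\]
from Section 4.2.3 specifies the conditional law of $V_{\gamma}$ given the realized conditions. Here the fundamental conditions $X_{m}$ supply the randomness and the influencing conditions act as parameters. I will therefore regard $(V_{\gamma},X_{li},X_{ld},X_{lik},X_{ldk})$ as a random vector. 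I will assume it has a joint density
\[
f_{V_{\gamma},X_{li},X_{ld},X_{lik},X_{ldk}},
\]
obtained as the conditional density $f_{V_{\gamma}}(\cdot\mid x_{li},\dots)$ multiplied by the density of the influencing conditions. For the distinguishable $X_{ld},X_{ldk}$, I will use the empirical/known distribution of their realizations across experiments, in the spirit of the Einstein footnote.

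\textbf{Step 1.} Use Lemma \ref{le} to identify the event $\{\mathfrak{x}\le \mathrm{x}\}$ with the event that the $V_{\gamma}$ of the experiment actually performed takes a value $\le \mathrm{x}$. Since every $\mathfrak{x}$ is a value of some $V_{\gamma}$, and $\gamma$ is determined by the realized conditions, we get
\[
\mathbb{P}(\mathfrak{x}\le \mathrm{x})=\Pr(V_{\gamma}\le \mathrm{x}),
\]
where $\gamma$ ranges over all experiments weighted by the law of the conditions.

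\textbf{Step 2.} Condition on the influencing conditions and apply Fubini/Tonelli to the nonnegative joint density. Integrating the conditions over $\mathcal{R}$ and $v_{\gamma}$ over $(-\infty,\mathrm{x}]$ gives the displayed quadruple-plus-one integral. The upper limit written $v_{\gamma}$ in the statement is read as $\mathrm{x}$.

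\textbf{Step 3.} Check that $\mathfrak{F}_{\mathbb{X}}(+\infty)=1$, since the total integral of a joint density is one. This also settles the pending claim $\mathbb{P}(\Pi)=1$ from the definition of $\mathbb{P}$. Monotonicity and right-continuity are inherited from the integral, so $\mathfrak{F}_{\mathbb{X}}$ is a genuine distribution function.

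\textbf{Step 4.} Differentiate in $\mathrm{x}$. The inner integral over the conditions defines the marginal
\[
h(v)=\int f(v,x_{li},\dots)\,dx_{li}\cdots dx_{ldk},
\]
which is integrable in $v$. By the Lebesgue differentiation theorem, $\mathfrak{F}_{\mathbb{X}}'(\mathrm{x})=h(\mathrm{x})$ for almost every $\mathrm{x}$, and at every continuity point of $h$. This yields the formula for $\mathfrak{f}_{\mathbb{X}}$, which is the marginal of $V_{\gamma}$ after mixing over the conditions.

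\textbf{Main obstacle.} The hard part is Step 1 together with the existence of the joint density. Lemma \ref{le} is stated informally, so the identification of $\mathbb{P}$ on $\Pi$ with a mixture law has to be imposed via property (iii) of the definition of $\mathbb{P}$. That property reads $\mathbb{P}=\mathcal{P}_{\mathbb{X}}(\mathcal{P}_{V_{\gamma}}(v_{\gamma}))$, and I would interpret it explicitly as the mixture $\int \mathcal{P}_{V_{\gamma}}\,d(\text{law of conditions})$.

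A second difficulty arises if some $X_{ld}$ are deterministic in each experiment and lack a density. In that case I would replace the corresponding $dx$-integral by integration against their empirical measure, which leaves the argument unchanged.
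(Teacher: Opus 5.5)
Your proposal is correct and follows essentially the paper's own route. The paper invokes the exchangeability Lemma to replace $\mathfrak{x}$ by $v_{\gamma}$, writes $\mathbb{P}(\mathfrak{x}\le\mathrm{x})$ as the joint density of $V_{\gamma}$ and the realized influencing conditions integrated over those conditions and then over $v_{\gamma}$, and differentiates to obtain the marginal; Appendix B repeats this with the factorization joint $=$ conditional $\times$ marginal, exactly as you do. Your extra care (reading the upper limit $v_{\gamma}$ as $\mathrm{x}$, checking total mass one, and noting that the derivative identity holds almost everywhere) only makes explicit what the paper leaves implicit.
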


Thus, as we know, $\mathbb{P}\left(\mathbb{X}\right)=1$. 

\section{The valuable and interesting properties of uncertain random variable and its UPDF}

The dynamic UPDF has an excellent capacity to characterize, represent, and interpret real data. We will only discuss its pioneering explanation of typical distributional characteristics and incorporation of pathwise property into distribution model hereafter. \footnote{Relevant empirical analysis will be presented in coming studies.} Prior to this, the following first discusses a number of special cases to illustrate some useful and intriguing properties of UPDF. 

\subsection{TUP and KSP}
\begin{longlist}
\item[1.]
If there is a dominant, fundamental condition governed by some distribution of KSP, TUP degenerates to KSP and may fall into the following two categories. 
\begin{longlist}
\item 
If $w=1$, which means there is only one research object in each experiment, rolling a hard-textured die can serve as a representation of this category of cases.
\item 
If $w>1$, meaning there are more than one research objects in each experiment, observing the diffusion of ink in water at so-called constant temperature is a common example of this category. 
\end{longlist}
\item[2.]
If there are a number of fundamental and influencing conditions, we know that TUP can also degenerate to KSP only if all conditions and $V_{n}$ or $V_{\gamma}$ are repetitive exactly as Kolmogorov required.
\end{longlist}

\subsection{TUP and nonlinear theories}

The following analysis will examine how TUP, in a more realistic and logically compelling manner, acquires the special explanatory power of Choquet theory, $g$-expectation, $G$-expectation and nonlinear probability. 

TUP can also characterize the probabilistic property in Choquet theory which holds significant practical value. Taking the Classic Single-Urn Three-Color Version of the Ellsberg Paradox Experiment \cite{R6} as an illustration, in the first round, when facing the ambiguity of the probability of drawing a black ball, denoted by $P(B)$, individuals' aversion to ambiguity leads them to generally underestimate $P(B)$. It is easy to accept that the degree of ambiguity aversion of a population is a distribution that varies from person to person, not a constant. Thus, a $V_{n}$ of $P(B)$ shifts to the left. Therefore, individuals choose R with known $P(R)$, the probability of red balls, implying $P(B) < P(R)$. In the second round, since $P(B)$ has nothing to do with ambiguity, the corresponding $V_{n}$ does not move negatively, whereas $P(A)$ does. Hence, BY is chosen and $P(B) > P(R)$ could be inferred. Thus, TUP can incorporate various (known) factors that affect probability---here ambiguity aversion into the model.

Specifically, when $\left\{V_{\underline{\mu}}, V_{\overline{\mu}}\right\}\subset\left\{V_{\gamma}:\gamma\in\Lambda\right\}$, we know that this is the topic of concern in the nonlinear expectation theories, which, in fact, analyzes special cases in TUP. The relevant theories developed by Peng (e.g., \cite{R5, R15}) for the first time include uncertainty analysis in the probability distribution. Nonetheless, since they do not delve into the origin and formation of the distribution of $V_{\gamma}$, pertinent inference and problem-solving can only be conducted using rigid assumptions of upper and lower bounds. From the perspective of the theoretical framework of TUP, it is fairly simple to understand why it is challenging to define and estimate the bounds since they actually do not exist. Moreover, although artificial upper and lower bounds allow nonlinear theories to analyze uncertainty, they can only analyze incomplete uncertainty within the artificial limits. Meanwhile, although the nonlinear theories are relatively systematic, including the limit theorem, stochastic analysis, etc., the fit between their models and the real data is far from satisfactory. And there are no compelling logics when the theories are used to describe leptokurtosis, fat tails, asymmetries, etc. However, the explanatory power of probability theories will be considerably enhanced if we can tap into these characteristics with the logic of TUP, as shown below. 

Additionally, researchers who investigate nonlinear probability theories and their applications using multi-armed bandits and two-armed slot machines (e.g., \cite{R2, R4}) have moved beyond the assumption of upper and lower bounds and have pioneered the derivation of nonlinear probability distributions under specific conditions. However, the research remains an analysis of special cases in discrete settings, based on repeated sampling on finite heterogeneous sample spaces and experiments.

\subsection{TUP's explanation of distributional characteristics including those transformed from pathwise property} 

The following will be a comprehensive analysis of common and important distributional characteristics, such as heavy tails/leptokurtosis and asymmetry, including jumps converted from pathwise property. As we know, researchers have put forth various PDFs to improve the models' fit to the actual data by changing the number of parameters and, consequently, the shape of the probability density distribution (e.g., \cite{R17, R1, R13, R19, R24}). Nevertheless, they have never been able to combine heavy tails/leptokurtosis, asymmetries, and especially jumps which are usually investigated in the stochastic process into a single PDF and, of particular importance, explore mechanistic causes for their formation. Unlike previous studies, TUP examines how different distributional characteristics evolve and why some of them are unavoidable.

\subsubsection{Source of the recurrent nature of heavy tails}

Heavy tails, ubiquitous across an exceptionally wide range of empirical domains, raise a foundational methodological puzzle: does their prevalence reflect an ontological property of complex systems, or is it merely a spurious artifact of time-varying volatility and unobserved heterogeneity? Regardless of which explanation one subscribes to, the analysis that follows may offer meaningful insights into the nature of this ubiquity. It is normal for a $X_{li}$ or the aggregate of $X_{liq}$, $q = 1, 2, ..., Q$, and/or $X_{likr}$, $r = 1, 2, ..., R$, to follow a normal distribution $N(\mu,\sigma^2)$ and at the same time there are no distinguishable influencing conditions. Meanwhile, it may be common for $V_\gamma$ to be normal distributed with mean $\mu_{V_\gamma}$ and variance $\sigma_{V_\gamma}^2$. As previously stated, only one realization of $X_l$ affects $V_\gamma$ in each experiment, and $X_l$ affects $V_\gamma$ in several ways, the most significant of which is the horizontal movement of $V_\gamma$. Thus, it is practical for the $\mu_{V_\gamma}$ of a family of random variables $\left\{V_{\gamma }:\gamma \in \Lambda \right\}$, with $V_{\gamma}$ on $\Phi_{\gamma}$, to be a random variable $\mathfrak{M}$ and follow a normal distribution with mean $\mu$ and variance $\sigma^2$. Then, the distribution of $\mathbb{X}$ has heavy tails. The relevant proof is in Appendix \ref{appB}. Moreover, It is easy to see that no matter whether normal distributions are normal or, as Lyon \cite{R12} argues, log normal distributions are normal, the analysis of the frequent occurrence of heavy tails does not differ significantly.

\subsubsection{Formation mechanisms of asymmetries}

It is evident that the formation of asymmetries and/or skewness can be easily demonstrated mathematically by taking the assumption of specific distributions of $V_{\gamma}$ and $X_{l}$. However, what is more worthy of study than the mathematical proof is the mechanism of forming asymmetric characteristics and its practical relevance. The following example focuses on the distribution of financial asset price fluctuations, which is crucial for finance and economics. We know that changes in asset prices are influenced by a variety of factors. International, macroeconomic and industrial factors, as well as the corporate and financial markets themselves, may all act as influencing conditions, $X_{li}$, $X_{ld}$, $X_{lik}$, and $X_{ldk}$. Fundamental conditions, $X_{mi}$ and $X_{md}$, can include investor rationality, sentiment, etc. Moreover, the $X_{l}$ that could result in losses or profits may adhere to the value function proposed by Tversky and Kahneman \cite{R20} as part of their cumulative prospect theory, for which they won a Nobel Prize. Accordingly, people are significantly more sensitive to losses, especially small losses, than to gains of the same magnitude where value function $v\left(x\right)=x^\alpha$ for gain, $x\geq0$, and $v\left(x\right)=-\lambda(-x)^\alpha$ for loss, $x<0$, for $\alpha=0.5$ and $\lambda=2.5$. Specifically, this indicates that $|C_{l}^\prime\left(-X_{l}\right)|>|C_{l}^\prime\left(X_{l}\right)| and |E_{k}^\prime\left(-K\right)|>|E_{k}^\prime\left(K\right)|$, with $-X_{l}$ and $X_{l}$ signifying conditions that involve losses and gains, respectively, and $|E_{k}^\prime\left(-K\right)|$ and $|E_{k}^\prime\left(K\right)|$ indicate short and long based on risk aversion and gain seeking. This in turn entails $|-\Delta\mu_{V_{\gamma}}|>|\Delta\mu_{V_{\gamma}}|$. Consequently, the density $\mathfrak{f}_\mathbb{X}(\mathfrak{x})$ may have its mode located in the negative domain. Moreover, financial theory and behavioral finance can both explain different distributional characteristics, such as asymmetries between the left and right of zero and tails. Thus, economic principles are integrated into the analysis of the formation mechanism and characteristics of the probability distribution. \footnote{Please refer to the follow-up study for a thorough theoretical and empirical analysis.}

\subsubsection{Generating mechanisms of jumps}

As we know, jumps are crucial to risk management, and underestimating the left tail has led to numerous momentous financial events, including the subprime crisis. However, current models have difficulties incorporating and examining jumps along with other critical distributional characteristics. Unlike stochastic analysis and stochastic processes, which essentially analyze jumps at the phenomenal level, TUP integrates jumps into the probability density distribution and provides an explanation of the principles of their formation. 

Regardless of the pattern followed by $X_{l}$, when a specific value, such as a notable negative signal, is reached, $|C_{l}^\prime|$ and/or $|E_{k}^\prime|$ become considerably larger. As previously mentioned, they have various effects on  $V_{\gamma}$ and the most substantial impact is the large-scale horizontal shift of $V_{\gamma}$ in the negative direction. Simultaneously, path dependence is inevitably a result until noteworthy bull news appears. That is, the undesirable changes in $V_{\gamma}$ as asset price fluctuations will again act as market feedback and affect other $V_{\gamma}$ and thus $\mathbb{X}$, subsequently gathering a set of distributions around a position relatively far from the origin on the negative axis. Then, it can be inferred that the randomly selected $\mathbb{X}$ from $\left\{V_\gamma:\gamma\in\Lambda\right\}$ will form some small ridgy distribution such as a jump in a local area in the tail of the distribution. 

\subsection{Other benefits of TUP}

First, it can be easily inferred from the logical reasoning of TUP that it can naturally incorporate the volatility smile and other features of real data into its modeling. Second, it is also easy to see that taking a time series approach to the modeling of TUP can be highly beneficial. For instance, the enhancement of the regression model achieved by Zhu and Müller \cite{R23} might also be further optimized. Third, adding weights to $V_{\gamma}$ may more effectively fulfill the requirements of certain applications. Fourth, an analysis based on increments in certain empirical studies not only complies with reality, but also aids in removing the dimensional differences between various conditions and reveals more intriguing patterns of distribution. Fifth, further investigation will provide more insight into the principles that govern the joint or independent influence pattern of means and variances of a family of random variables $\left\{V_{\gamma }:\gamma \in \Lambda \right\}$ on $\mathbb{X}$. Then, it is obvious that TUP may help explain why the distribution curve might not be that smooth and normative if it is no longer from a mathematical model that is entirely abstract. Next, it offers another approach to modeling uncertainty that is close to the objective causes of uncertainty. Additionally, TUP can incorporate domain-specific knowledge from various fields into probability distributions, thus better meeting the unique application requirements of different domains. Moreover, it will definitely enhance and extend the efficacy of probabilistic methodologies widely employed in artificial intelligence. Finally, regardless of the debate about subjective probability, TUP provides a new way to organically combine objective reality and subjective judgment in modeling, measuring, and analyzing probability in the social sciences, and thus better tackles the problems of the inapplicability of KSP and nonlinear theories to the social sciences and the over-broad attribution of beliefs, which Suárez \cite{R16} argues undermines the objectivity of probability judgments. \footnote{The empirical applications of TUP in areas including finance, economics, education, corporate management, and public administration are discussed in detail in subsequent studies.}

\section{Conclusion}

This paper proposes a new theory of probability---TUP that addresses the problem of overly abstract assumptions of current probability theories, such as repetitive conditions and bounds, and provides a more effective analytical tool for continuously changing experiments that are more common in reality. We refine the basic definition of sample space to accommodate varying experiments, since the world is ever-changing and corresponding experiments also have some degree of distinguishability. We also derive the logic and process of the formation of an innovative kind of probability density distributions based on the ongoing sample spaces in the context of uncertain coaction of the fundamental and influencing conditions of experiments. Then, the theoretical and practical benefits of TUP are briefly discussed. 

In theory, TUP reveals the statistical determinacy of chance events and the greater complexity of deterministic phenomena. Specifically, the coaction of many stochastic phenomena results in distributional characteristics of certainty, and the emergence of other common characteristics is more intricate and involves combined action of deterministic laws and numerous randomness. In other words, a large number of distinct experiments may also present specific statistical regularities. Moreover, TUP helps do complete justice to the interpretations of probability by developing what Hájek \cite{R9} refers to as something of a patchwork, with partially overlapping pieces and principles about how the different interpretations ought to relate. We also touch on the other potential values of the new theoretical framework, such as analyzing uncertainty from a formation mechanism perspective, enhancing the objectivity of subjective probability, and solving the difficult problem of the incompatibility of known and unknown elements in probability theory. In addition, it can be demonstrated that, within the theoretical framework presented in this paper, theories such as Kolmogorov's system of probability and nonlinear theories are special cases. 

In application, it can be anticipated that better application outcomes will result from the actual data being better represented and explained. As William Feller \cite{R7} notes, the modern theory of probability has been claimed to be too abstract to be useful, and new theories are needed to better address the demands of various application scenarios. Exactly, TUP gives the natural sciences more accurate analytical tools, since it approaches reality at the theoretical assumption level. In the social sciences, it is a proper alternative to measure probability in a manner that can logically and systematically weave the subjective and objective into an indivisible whole. Moreover, it can be deduced from the theoretical framework of TUP, in the context of big data, machine learning techniques may be used to deal with continuously changing data and to more precisely estimate distributions and thus to better support diverse applications.

\newpage
\begin{appendix}
\section{All POSSIBLE PDRES}
\label{appA}

\begin{table}[h]
\caption{PDREs when \(w = 1\)}
\label{tab1}
\begin{tabular}{@{}lrrrrrr@{}}
\hline
\noalign{\vskip 15pt}
\parbox{0.8cm}{\centering Sce-\\nario} & \multicolumn{1}{c}{\parbox{1.8cm}{\centering Cond. of $X_m$ \\ \&  Rel. Not.\footnotemark[1]}}
& \multicolumn{1}{c}{\parbox{1.65cm}{\centering Eff. of $C_{m}$ \\ \& Rel. Not.}} & \multicolumn{1}{c}{\parbox{1.65cm}{\centering Cond. of $X_{l}$ \\ \& Rel. Not.}} & \multicolumn{1}{c}{\parbox{1.65cm}{\centering Eff. of $C_{l}$ \\ \&  Rel. Not.}} & \multicolumn{1}{c}{\parbox{1.65cm}{\centering Eff. of $C_{lk}$ \\ \& Rel. Not.}} & \multicolumn{1}{c}{\parbox{1.65cm}{\centering Eff. of $E_{k}$ \\ \& Rel. Not.}}\\
\noalign{\vskip 15pt}
\cline{4-7}
    \noalign{\vskip 1.2pt}
    \cline{4-7}
    \\[0.6ex]

1.ii.iiii & \parbox[t]{1.65cm}{\raggedleft ind:\footnotemark[2] \\ \(X_{mih}\)\footnotemark[3]} & \parbox[t]{1.65cm}{\raggedleft ind: \\ \(C_{mih}\)} & \parbox[t]{1.65cm}{\raggedleft ind: \\ \(X_{liq}\); \(X_{likr}\)\footnotemark[4]} & \parbox[t]{1.65cm}{\raggedleft ind/ine: \\ \(C_{liq}\)}  & \parbox[t]{1.65cm}{\raggedleft ind/ine: \\ \(C_{likr}\)} & \parbox[t]{1.65cm}{\raggedleft ind/ine: \\ \(E_{kir}\)} \\ \cdashline{6-7}[4.5pt/4.3pt] \\[0.8ex]

1.ii.diii & & & \parbox[t]{1.65cm}{\raggedleft dis: \\ \(X_{lds}\); \(X_{ldkt}\)\footnotemark[5]} & \parbox[t]{1.65cm}{\raggedleft ind: \\ \(C_{lds}\)} & \parbox[t]{1.65cm}{\raggedleft ind: \\ \(C_{ldkt}\)} & \parbox[t]{1.65cm}{\raggedleft ind: \\ \(E_{kdt}\)} \\[5.8ex]

1.ii.diid & & & & & & \parbox[t]{1.65cm}{\raggedleft dis: \\ \(E_{kdt}\)} \\[5.8ex]

1.ii.diiid & & & & & & \parbox[t]{1.65cm}{\raggedleft ind \& dis: \\ \(E_{kdt}\)} \\[5.8ex]

1.ii.didi & & & & & \parbox[t]{1.65cm}{\raggedleft dis: \\ \(C_{ldkt}\)} & \parbox[t]{1.65cm}{\raggedleft ind: \\ \(E_{kdt}\)} \\[5.8ex]

1.ii.didd & & & & & & \parbox[t]{1.65cm}{\raggedleft dis: \\ \(E_{kdt}\)} \\[5.8ex]

1.ii.didid & & & & & & \parbox[t]{1.65cm}{\raggedleft ind \& dis: \\ \(E_{kdt}\)} \\[5.8ex]

1.ii.diidi & & & & & \parbox[t]{1.65cm}{\raggedleft ind \& dis: \\ \(C_{ldkt}\)} & \parbox[t]{1.65cm}{\raggedleft ind: \\ \(E_{kdt}\)} \\[5.8ex]

1.ii.diidd & & & & & & \parbox[t]{1.65cm}{\raggedleft dis: \\ \(E_{kdt}\)} \\[5.8ex]

1.ii.diidid & & & & & & \parbox[t]{1.65cm}{\raggedleft ind \& dis: \\ \(E_{kdt}\)} \\ \cdashline{6-7}[4.5pt/4.3pt] \\[0.8ex]

1.ii.dd & & & & \parbox[t]{1.65cm}{\raggedleft dis: \\ \(C_{lds}\)} & \multicolumn{2}{r}{ Replicate dashed content } \\[5.8ex]

1.ii.did & & & & \parbox[t]{1.65cm}{\raggedleft ind \& dis: \\ \(C_{lds}\)} & \multicolumn{2}{r}{ Replicate dashed content } \\ \cline{6-7} 

1.ii.idiii & & & \parbox[t]{1.65cm}{\raggedleft ind \& dis: \\ \(X_{liq}\); \(X_{likr}\); \(X_{lds}\); \(X_{ldkt}\)} & \parbox[t]{1.65cm}{\raggedleft ind: \\ \(C_{liq}\); \(C_{lds}\)} & \parbox[t]{1.65cm}{\raggedleft ind: \\ \(C_{likr}\); \(C_{ldkt}\)} & \parbox[t]{1.65cm}{\raggedleft ind: \\ \(E_{kir}\); \(E_{kdt}\)} \\[5.8ex]

1.ii.idiid & & & & & & \parbox[t]{1.65cm}{\raggedleft dis: \\ \(E_{kdt}\)} \\[5.8ex]\\
\hline
\end{tabular}
\end{table}

\footnotetext[1]{cond., rel., and not. serve as abbreviations for conditions, related, and notation, respectively.}
\footnotetext[2]{ind, ine, and dis serve as abbreviations for indistinguishable, ineffective, and distinguishable, respectively.}
\footnotetext[3]{Where $h = 1, 2, \ldots, H$.}
\footnotetext[4]{Where $q = 1, 2, \ldots, Q$, $r = 1, 2, \ldots, R$.}
\footnotetext[5]{Where $s = 1, 2, \ldots, S$, $t = 1, 2, \ldots, T$.}
\footnotetext[6]{It is only possible for \(C_{lds}\), \(s = 1, 2, \ldots, S\), and \(C_{ldkt}\), \(t = 1, 2, \ldots, T\), to be either indistinguishable or distinguishable.}
\footnotetext[7]{We are aware that \(E_{kir}\) and \(E_{kdt}\) can have a variety of ind and dis combinations. Here, we won't list them one by one. Similar circumstances won't be explained further in the table that follows.}
\footnotetext[8]{Where \(j = 1, 2, \ldots, J\).}


\begin{table}
\begin{tabular}{@{}lrrrrrr@{}}
\hline
\noalign{\vskip 15pt}
\parbox{0.8cm}{\centering Sce-\\nario} & \multicolumn{1}{c}{\parbox{1.8cm}{\centering Cond. of $X_m$ \\ \&  Rel. Not.}}
& \multicolumn{1}{c}{\parbox{1.65cm}{\centering Eff. of $C_{m}$ \\ \& Rel. Not.}} & \multicolumn{1}{c}{\parbox{1.65cm}{\centering Cond. of $X_{l}$ \\ \& Rel. Not.}} & \multicolumn{1}{c}{\parbox{1.65cm}{\centering Eff. of $C_{l}$ \\ \&  Rel. Not.}} & \multicolumn{1}{c}{\parbox{1.65cm}{\centering Eff. of $C_{lk}$ \\ \& Rel. Not.}} & \multicolumn{1}{c}{\parbox{1.65cm}{\centering Eff. of $E_{k}$ \\ \& Rel. Not.}}\\
\noalign{\vskip 30pt}

1.ii.idiiid & & & & & & \parbox[t]{1.65cm}{\raggedleft ind \& dis: \\ \(E_{kir}\); \(E_{kdt}\)} \\[5.7ex]

1.ii.ididi & & & & & \parbox[t]{1.65cm}{\raggedleft dis: \\ \(C_{ldkt}\)} & \parbox[t]{1.65cm}{\raggedleft ind: \\ \(E_{kdt}\)} \\[5.7ex]

1.ii.ididd & & & & & & \parbox[t]{1.65cm}{\raggedleft dis: \\ \(E_{kdt}\)} \\[5.7ex]

1.ii.ididid & & & & & & \parbox[t]{1.65cm}{\raggedleft ind \& dis: \\ \(E_{kdt}\)} \\[5.7ex]

1.ii.idiidi & & & & & \parbox[t]{1.65cm}{\raggedleft ind \& dis: \\ \(C_{likr}\); \(C_{ldkt}\)} & \parbox[t]{1.65cm}{\raggedleft ind: \\ \(E_{kir}\); \(E_{kdt}\)} \\[5.7ex]

1.ii.idiidd & & & & & & \parbox[t]{1.65cm}{\raggedleft dis: \\ \(E_{kdt}\)} \\[5.7ex]

1.ii.idiidid & & & & & & \parbox[t]{1.65cm}{\raggedleft ind \& dis: \\ \(E_{kir}\); \(E_{kdt}\)\footnotemark[6]} \\
\cline{6-7}
\\[0.8ex]

1.ii.idd & & & & \parbox[t]{1.65cm}{\raggedleft dis: \\ \(C_{lds}\) \footnotemark[7]} & \multicolumn{2}{r}{ Replicate thin-line content } \\[5.7ex]

1.ii.idid & & & & \parbox[t]{1.65cm}{\raggedleft ind \& dis: \\ \(C_{liq}\) \& \(C_{lds}\)} & \multicolumn{2}{r}{ Replicate thin-line content }\\
\cline{4-7}
\noalign{\vskip 1.2pt}
\cline{4-7}
\\[0.8ex]

1.idi & \parbox[t]{1.65cm}{\raggedleft ind \& dis: \\ \(X_{mih}\); \(X_{mdj}\)\footnotemark[8]} & \parbox[t]{1.65cm}{\raggedleft ind: \\ \(C_{mih}\); \(C_{mdj}\)} & \multicolumn{4}{r}{ Replicate double-line content } \\[5.7ex]

1.idd & & \parbox[t]{1.65cm}{\raggedleft dis: \\ \(C_{mdj}\)} & \multicolumn{4}{r}{ Replicate double-line content } \\[5.7ex]

1.idid & & \parbox[t]{1.65cm}{\raggedleft ind \& dis: \\ \(C_{mih}\); \(C_{mdj}\)} & \multicolumn{4}{r}{ Replicate double-line content }\\
\hline
\end{tabular}
\end{table}


\begin{table}[h]
\caption{PDREs when \(w > 1\)}
\label{tab2}
\begin{tabular}{@{}lrrrrrr@{}}
\hline
\noalign{\vskip 15pt}
\parbox{0.8cm}{\centering Sce-\\nario} & \multicolumn{1}{c}{\parbox{1.8cm}{\centering Cond. of $X_m$ \\ \&  Rel. Not.}}
& \multicolumn{1}{c}{\parbox{1.65cm}{\centering Eff. of $C_{m}$ \\ \& Rel. Not.}} & \multicolumn{1}{c}{\parbox{1.65cm}{\centering Cond. of $X_{l}$ \\ \& Rel. Not.}} & \multicolumn{1}{c}{\parbox{1.65cm}{\centering Eff. of $C_{l}$ \\ \&  Rel. Not.}} & \multicolumn{1}{c}{\parbox{1.65cm}{\centering Eff. of $C_{lk}$ \\ \& Rel. Not.}} & \multicolumn{1}{c}{\parbox{1.65cm}{\centering Eff. of $E_{k}$ \\ \& Rel. Not.}}\\
\noalign{\vskip 15pt}
\cline{4-7}
    \noalign{\vskip 1.2pt}
    \cline{4-7}
    \noalign{\vskip 3.2pt}
    \cdashline{6-7}[4.5pt/4.5pt]
    \\[0.6ex]

W.ii.iiii & \parbox[t]{1.65cm}{\raggedleft ind: \\ \(X_{mih}\) \\ (among research objects)} & \parbox[t]{1.65cm}{\raggedleft ind: \\ \(C_{mih}\)} & \parbox[t]{1.65cm}{\raggedleft ind: \\ \(X_{liq}\); \(X_{likr}\)} & \parbox[t]{1.65cm}{\raggedleft ind/ine: \\ \(C_{liq}\)} & \parbox[t]{1.65cm}{\raggedleft ind/ine: \\ \(C_{likr}\)} & \parbox[t]{1.65cm}{\raggedleft ind/ine: \\ \(E_{kir}\)} \\[5ex]

W.ii.iidi & & & & & \parbox[t]{1.65cm}{\raggedleft dis: \\ \(C_{likr}\) \\ (among research objects)} & \parbox[t]{1.65cm}{\raggedleft ind: \\ \(E_{kir}\)} \\[5ex]
    
W.ii.iidd & & & & & & \parbox[t]{1.65cm}{\raggedleft dis: \\ \(E_{kir}\)} \\[5ex]
    
W.ii.iidid & & & & & & \parbox[t]{1.65cm}{\raggedleft ind \& dis: \\ \(E_{kir}\)} \\[5ex]
    
W.ii.iiidi & & & & & \parbox[t]{1.65cm}{\raggedleft ind \& dis: \\ \(C_{likr}\)} & \parbox[t]{1.65cm}{\raggedleft ind: \\ \(E_{kir}\)} \\[5ex]
    
W.ii.iiidd & & & & & & \parbox[t]{1.65cm}{\raggedleft dis: \\ \(E_{kir}\)} \\[5ex]
    
W.ii.iiidid & & & & & & \parbox[t]{1.65cm}{\raggedleft ind \& dis: \\ \(E_{kir}\)} \\
\cdashline{6-7}[4.5pt/4.5pt]
    \\ [3ex]
    
W.ii.id & & & & \parbox[t]{1.65cm}{\raggedleft dis: \\ \(C_{liq}\)} & \multicolumn{2}{r}{ Replicate dashed content } \\[5ex]
    
W.ii.iid & & & & \parbox[t]{1.65cm}{\raggedleft ind \& dis: \\ \(C_{liq}\)} & \multicolumn{2}{r}{ Replicate dashed content } \\
\cdashline{6-7}[0.4pt/1.8pt]\\ [3ex]
    
W.ii.diii & & & \parbox[t]{1.65cm}{\raggedleft dis: \\ \(X_{lds}\); \(X_{ldkt}\)} & \parbox[t]{1.65cm}{\raggedleft ind: \\ \(C_{lds}\)} & \parbox[t]{1.65cm}{\raggedleft ind: \\ \(C_{ldkt}\)} & \parbox[t]{1.65cm}{\raggedleft ind: \\ \(E_{kdt}\)} \\[5ex]
    
W.ii.diid & & & & & & \parbox[t]{1.65cm}{\raggedleft dis: \\ \(E_{kdt}\)} \\[5ex]
    
W.ii.diiid & & & & & & \parbox[t]{1.65cm}{\raggedleft ind \& dis: \\ \(E_{kdt}\)} \\[5ex]
    
W.ii.didi & & & & & \parbox[t]{1.65cm}{\raggedleft dis: \\ \(C_{ldkt}\)} & \parbox[t]{1.65cm}{\raggedleft ind: \\ \(E_{kdt}\)} \\[5ex]
    
W.ii.didd & & & & & & \parbox[t]{1.65cm} {\raggedleft dis: \\ \(E_{kdt}\)} \\[5ex] 
    
W.ii.didid & & & & & & \parbox[t]{1.65cm}{\raggedleft ind \& dis: \\ \(E_{kdt}\)} \\
\hline
\end{tabular}
\end{table}

\begin{table}[h]
\begin{tabular}{@{}lrrrrrr@{}}
\hline
\noalign{\vskip 20pt}
\parbox{0.8cm}{\centering Sce-\\nario} & \multicolumn{1}{c}{\parbox{1.8cm}{\centering Cond. of $X_m$ \\ \&  Rel. Not.}}
& \multicolumn{1}{c}{\parbox{1.65cm}{\centering Eff. of $C_{m}$ \\ \& Rel. Not.}} & \multicolumn{1}{c}{\parbox{1.65cm}{\centering Cond. of $X_{l}$ \\ \& Rel. Not.}} & \multicolumn{1}{c}{\parbox{1.65cm}{\centering Eff. of $C_{l}$ \\ \&  Rel. Not.}} & \multicolumn{1}{c}{\parbox{1.65cm}{\centering Eff. of $C_{lk}$ \\ \& Rel. Not.}} & \multicolumn{1}{c}{\parbox{1.65cm}{\centering Eff. of $E_{k}$ \\ \& Rel. Not.}}\\
\noalign{\vskip 20pt}
\hline

W.ii.diidi & & & & & \parbox[t]{1.65cm}{\raggedleft ind \& dis: \\ \(C_{ldkt}\)} & \parbox[t]{1.65cm}{\raggedleft ind: \\ \(E_{kdt}\)} \\[4ex]
    W.ii.diidd & & & & & & \parbox[t]{1.65cm}{\raggedleft dis: \\ \(E_{kdt}\)} \\[4ex]
    W.ii.diidid & & & & & & \parbox[t]{1.65cm}{\raggedleft ind \& dis: \\ \(E_{kdt}\)} \\
    \cdashline{6-7}[0.4pt/1.8pt] \\[1ex]
    W.ii.dd & & & & \parbox[t]{1.65cm}{\raggedleft dis: \\ \(C_{lds}\)} & \multicolumn{2}{r}{ Replicate dotted content } \\[4ex]
    W.ii.did & & & & \parbox[t]{1.65cm}{\raggedleft ind \& dis: \\ \(C_{lds}\)} & \multicolumn{2}{r}{ Replicate dotted content } \\
    \cline{6-7} \\[1ex]
    W.ii.idiii & & & \parbox[t]{1.65cm}{\raggedleft ind \& dis: \\ \(X_{liq}\); \(X_{likr}\); \\ \(X_{lds}\); \(X_{ldkt}\)} & \parbox[t]{1.65cm}{\raggedleft ind: \\ \(C_{liq}\); \(C_{lds}\)} & \parbox[t]{1.65cm}{\raggedleft ind: \\ \(C_{likr}\); \(C_{ldkt}\)} & \parbox[t]{1.65cm}{\raggedleft ind: \\ \(E_{kir}\); \(E_{kdt}\)} \\[4ex]
    W.ii.idiid & & & & & & \parbox[t]{1.65cm}{\raggedleft dis: \\ \(E_{kdt}\)} \\[5ex]
    W.ii.idiiid & & & & & & \parbox[t]{1.65cm}{\raggedleft ind \& dis: \\ \(E_{kir}\); \(E_{kdt}\)} \\[4ex]
    W.ii.ididi & & & & & \parbox[t]{1.65cm}{\raggedleft dis: \\ \(C_{likr}\); \(C_{ldkt}\)} & \parbox[t]{1.65cm}{\raggedleft ind: \\ \(E_{kir}\); \(E_{kdt}\)} \\[4ex]
    W.ii.ididd & & & & & & \parbox[t]{1.65cm}{\raggedleft dis: \\ \(E_{kir}\); \(E_{kdt}\)} \\[4ex]
    W.ii.ididid & & & & & & \parbox[t]{1.65cm}{\raggedleft ind \& dis: \\ \(E_{kir}\); \(E_{kdt}\)} \\[4ex]
    W.ii.idiidi & & & & & \parbox[t]{1.65cm}{\raggedleft ind \& dis: \\ \(C_{likr}\); \(C_{ldkt}\)} & \parbox[t]{1.65cm}{\raggedleft ind: \\ \(E_{kir}\); \(E_{kdt}\)} \\[5ex]
    W.ii.idiidd & & & & & & \parbox[t]{1.65cm}{\raggedleft dis: \\ \(E_{kir}\); \(E_{kdt}\)} \\[4ex]
    W.ii.idiidid & & & & & & \parbox[t]{1.65cm}{\raggedleft ind \& dis: \\ \(E_{kir}\); \(E_{kdt}\)} \\
    \cline{6-7} \\[1ex]
    W.ii.idd & & & & \parbox[t]{1.65cm}{\raggedleft dis: \\ \(C_{liq}\); \(C_{lds}\)} & \multicolumn{2}{r}{ Replicate thin-line content } \\[4ex]
    W.ii.idid & & & & \parbox[t]{1.65cm}{\raggedleft ind \& dis: \\ \(C_{liq}\); \(C_{lds}\)} & \multicolumn{2}{r}{ Replicate thin-line content } \\
    \cline{4-7}
    \noalign{\vskip 1.2pt}
    \cline{4-7} \\[1ex]
    W.idi & \parbox[t]{1.65cm}{\raggedleft ind \& dis: \\ \(X_{mih}\); \(X_{mdj}\)} & \parbox[t]{1.65cm}{\raggedleft ind: \\ \(C_{mih}\); \(C_{mdj}\)} & \multicolumn{4}{r}{Replicate double-line content } \\[4em]
    W.idid & & \parbox[t]{1.65cm}{\raggedleft ind \& dis: \\ \(C_{mih}\); \(C_{mdj}\)} & \multicolumn{4}{r}{ Replicate double-line content }\\
\hline
\end{tabular}
\end{table}

\clearpage
\section{PROOF OF THE RECURRENT NATURE OF HEAVY TAILS}
\label{appB}
\begin{proof}
Given the mean of the random variable $V_\gamma$, its distribution function is
\begin{equation}
\begin{aligned}
   & F_{V_\gamma|\mathfrak{M}}(\mathrm{v}_\gamma|\mathfrak{m})=\mathcal{P}_{V_\gamma}\left(v_\gamma\le\mathrm{v}_\gamma|\mu_{V_\gamma}=\mathfrak{m}\right) \\
   & =\int_{\infty}^{\mathrm{v}_\gamma}\frac{f_{V_\gamma,\ \mathfrak{M}}(v_\gamma,\mathfrak{m})}{f_{\mathfrak{M}}\left(\mathfrak{m}\right)}dv_\gamma=\int_{-\infty}^{\mathrm{v}_\gamma}{f_{V_\gamma|\ \mathfrak{M}}\left(v_\gamma|\mathfrak{m}\right)}dv_\gamma.
\end{aligned}
\end{equation}

According to Theorom \ref{th} and Lemma \ref{le}, an uncertain random variable $\mathbb{X}$ is constructed by randomly selecting from a family of random variables $\left\{V_{\gamma }:\gamma \in \Lambda \right\}$, where each $V_{\gamma}$ is defined on the corresponding set $\Phi_{\gamma}$, indicating that $\mathfrak{x}$ and $v_\gamma$ are exchangeable. Then,
\begin{equation}
\begin{aligned}
    \mathfrak{F}_{\mathbb{X}}(\mathrm{x})=\mathbb{P}(\mathfrak{x}\le\mathrm{x})=\int_{-\infty}^{\mathrm{x}}\mathfrak{f}_{\mathbb{X}}(\mathfrak{x})d\mathfrak{x}=\int_{-\infty}^{\mathrm{v}_\gamma}{\int_{-\infty}^{+\infty}{f_{V_\gamma,\mathfrak{M}}\left(v_\gamma,\mu_{V_\gamma}\right)}d\mu_{V_\gamma}}dv_\gamma.
\end{aligned}
\end{equation}

Based on (B1), (B2) can be

\begin{equation}
    \int_{-\infty}^{\mathrm{v}_{\gamma}}{\int_{-\infty}^{+\infty}{f_{V_{\gamma}|\mathfrak{M}}\left(v_{\gamma}|\mu_{V_{\gamma}}\right)}{f_{\mathfrak{M}}(\mu_{V_{\gamma}})d\mu_{V_{\gamma}}}dv_{\gamma}}. 
\end{equation}

Since
\begin{equation*}
    f_{\mathfrak{M}}\left(\mu_{V_{\gamma}}\right)=\frac{1}{\sigma\sqrt{2\pi}}e^{-\frac{\left(\mu_{V_{\gamma}}-\mu\right)^2}{2\sigma^2}}
\end{equation*}

and
\begin{equation*}
    f_{V_\gamma}\left(v_\gamma\right)=\frac{1}{\sigma_{V_\gamma}\sqrt{2\pi}}e^{-\frac{\left(v_\gamma-\mu_{V_\gamma}\right)^2}{2\sigma_{V_{\gamma}}^2}},
\end{equation*}
(B.3) can be shown as
\begin{equation*}
    \int_{-\infty}^{\mathrm{v}_\gamma}{\int_{-\infty}^{+\infty}\frac{1}{\sigma_{V_\gamma}\sqrt{2\pi}}e^{-\frac{\left(v_\gamma-\mu_{V_\gamma}\right)^2}{2\sigma_{V_{\gamma}}^2}}\frac{1}{\sigma\sqrt{2\pi}}e^{-\frac{\left(\mu_{V_\gamma}-\mu\right)^2}{2\sigma^2}}d\mu_{V_\gamma}}dv_\gamma.
\end{equation*}

Thus,
\begin{equation}
\begin{aligned}
    \mathfrak{f}_{\mathbb{X}}(\mathfrak{x})
    & =\mathfrak{F}_{\mathbb{X}}^\prime={\int_{-\infty}^{+\infty}{\frac{1}{\sigma_{V_{\gamma}}\sqrt{2\pi}}e^{-\frac{\left(v_{\gamma}-\mu_{V_{\gamma}}\right)^2}{2\sigma_{V_{\gamma}}^2}}\frac{1}{\sigma\sqrt{2\pi}}e^{-\frac{\left(\mu_{V_{\gamma}}-\mu\right)^2}{2\sigma^2}}}d\mu_{V_{\gamma}}} \\
    & =\int_{-\infty}^{+\infty}{{f_{V_{\gamma}}\left(v_{\gamma}\right)}f_{\mathfrak{M}}\left(\mu_{V_{\gamma}}\right)}d\mu_{V_{\gamma}}.
\end{aligned}
\end{equation}

Subsequently,
\begin{eqnarray*}
    \mu_{\mathbb{X}} & = & E(\mathbb{X}) \\
                     & = & \int_{-\infty}^{+\infty}{\mathfrak{x}\mathfrak{f}_{\mathbb{X}}(\mathfrak{x})}d\mathfrak{x} \\
                     & = & \int_{-\infty}^{+\infty}\mathfrak{x}\int_{-\infty}^{+\infty}{{f_{V_{\gamma}}\left(v_{\gamma}\right)}f_{\mathfrak{M}}\left(\mu_{V_{\gamma}}\right)}d\mu_{V_{\gamma}}d\mathfrak{x} \\
                     & = & \int_{-\infty}^{+\infty}\mathfrak{x}\int_{-\infty}^{+\infty}\frac{1}{\sigma_{V_{\gamma}}\sqrt{2\pi}}e^{-\frac{\left(v_{\gamma}-\mu_{V_{\gamma}}\right)^2}{2\sigma_{V_{\gamma}}^2}}\frac{1}{\sigma\sqrt{2\pi}}e^{-\frac{\left(\mu_{V_{\gamma}}-\mu\right)^2}{2\sigma^2}}d\mu_{V_{\gamma}}d\mathfrak{x} \\
                     & = & \int_{-\infty}^{+\infty}\frac{1}{\sigma\sqrt{2\pi}}e^{-\frac{\left(\mu_{V_{\gamma}}-\mu\right)^2}{2\sigma^2}}\int_{-\infty}^{+\infty}{{v_{\gamma}}\frac{1}{\sigma_{V_\gamma}\sqrt{2\pi}}e^{-\frac{\left(v_{\gamma}-\mu_{V_{\gamma}}\right)^2}{2\sigma_{V_{\gamma}}^2}}d{v_{\gamma}}}d\mu_{V_{\gamma}} \\
                     & = & \int_{-\infty}^{+\infty}{f_{\mathfrak{M}}\left(\mu_{V_{\gamma}}\right)\mu_{V_{\gamma}}}d\mu_{V_{\gamma}} \\
                     & = & E(\mu_{V_{\gamma}}) \\
                     & = & \mu
\end{eqnarray*}
and
\begin{equation*}
\begin{aligned}
        E(\mathbb{X}^2)
        & = \int_{-\infty}^{+\infty}\mathfrak{x}^2\mathfrak{f}_{\mathbb{X}}(\mathfrak{x})d\mathfrak{x} \\
        & = \int_{-\infty}^{+\infty}{v_{\gamma}}^2\int_{-\infty}^{+\infty}f_{V_{\gamma}}\left(v_{\gamma}\right)f_{\mathfrak{M}}\left(\mu_{V_{\gamma}}\right)d\mu_{V_{\gamma}}dv_{\gamma} \\         
        & = \int_{-\infty}^{+\infty}\frac{1}{\sigma\sqrt{2\pi}}e^{-\frac{\left(\mu_{V_{\gamma}}-\mu\right)^2}{2\sigma^2}}\int_{-\infty}^{+\infty}{{v_{\gamma}}^2\frac{1}{\sigma_{V_{\gamma}}\sqrt{2\pi}}e^{-\frac{\left(v_{\gamma}-\mu_{V_{\gamma}}\right)^2}{2\sigma_{V_{\gamma}}^2}}dv_{\gamma}}d\mu_{V_{\gamma}} \\
        & = \int_{-\infty}^{+\infty}{\frac{1}{\sigma\sqrt{2\pi}}e^{-\frac{\left(\mu_{V_{\gamma}}-\mu\right)^2}{2\sigma^2}}\left({\mu_{V_{\gamma}}}^2+\sigma_{V_{\gamma}}^2\right)}d\mu_{V_{\gamma}} \\ 
        & = \sigma^2+\mu^2+\sigma_{V_{\gamma}}^2.
\end{aligned}
\end{equation*}

Then,
\begin{equation*}
\sigma_{\mathbb{X}}^2=Var\left(\mathbb{X}\right)=E(\mathbb{X}^2)-\left(E\left(\mathbb{X}\right)\right)^2=\sigma^2+\sigma_{V_{\gamma}}^2.
\end{equation*}

Consequently, 
\begin{align}
    Kurt\left(\mathbb{X}\right)
    = & E\left[\left(\frac{\mathbb{X}-\mu_{\mathbb{X}}}{\sigma_{\mathbb{X}}}\right)^4\right] \nonumber \\
    = & \int_{-\infty}^{+\infty}\left(\frac{\mathfrak{x}-\mu}{\sigma_{\mathbb{X}}}\right)^4\mathfrak{f}_{\mathbb{X}}\left(\mathfrak{x}\right)d\mathfrak{x} \nonumber \\
    = & \int_{-\infty}^{+\infty}\left(\frac{v_{\gamma}-\mu}{\sigma_{\mathbb{X}}}\right)^4\int_{-\infty}^{+\infty}f_{V_{\gamma}}\left(v_{\gamma}\right)f_{\mathfrak{M}}\left(\mu_{V_{\gamma}}\right)d\mu_{V_\gamma}dv_{\gamma} \nonumber \\
    = & \int_{-\infty}^{+\infty}f_{\mathfrak{M}}\left(\mu_{V_\gamma}\right)\int_{-\infty}^{+\infty}{\left(\frac{v_{\gamma}-\mu}{\sigma_{\mathbb{X}}}\right)^4}{f_{V_{\gamma}}\left(v_{\gamma}\right)dv_{\gamma}}d\mu_{V_{n}} \nonumber \\
    = & \int_{-\infty}^{+\infty}f_{\mathfrak{M}}\left(\mu_{V_{\gamma}}\right)\int_{-\infty}^{+\infty}{\left(\frac{v_{\gamma}-\mu}{\sigma_{\mathbb{X}}}\right)^4\frac{1}{\sigma_{V_{\gamma}}\sqrt{2\pi}}e^{-\frac{\left(v_{\gamma}-\mu_{V_{\gamma}}\right)^2}{2\sigma_{V_{\gamma}}^2}}}dv_{\gamma}d\mu_{V_{\gamma}} \nonumber \\
    = & \int_{-\infty}^{+\infty}f_{\mathfrak{M}}\left(\mu_{V_{\gamma}}\right) \\
    & \int_{-\infty}^{+\infty}{\left(\frac{\sigma_{V_{\gamma}}}{\sigma_{\mathbb{X}}}\frac{v_{\gamma}-\mu_{V_{\gamma}}}{\sigma_{V_{\gamma}}}+\frac{\mu_{V_{\gamma}}-\mu}{\sigma_{\mathbb{X}}}\right)^4\frac{1}{\sigma_{V_{\gamma}}\sqrt{2\pi}}e^{-\frac{\left(v_{\gamma}-\mu_{V_{\gamma}}\right)^2}{2\sigma_{V_{\gamma}^2}}}dv_{\gamma}}d\mu_{V_{\gamma}} \nonumber \\
    = & \int_{-\infty}^{+\infty}f_{\mathfrak{M}}\left(\mu_{V_{\gamma}}\right)[
    3\frac{{\sigma_{V_{\gamma}}}^4}{{\sigma_{\mathbb{X}}}^4}+6\frac{{\sigma_{V_{\gamma}}}^2}{{\sigma_{\mathbb{X}}}^2}\frac{{\left(\mu_{V_{\gamma}}-\mu\right)}^2}{{\sigma_{\mathbb{X}}}^2}+\frac{{\left(\mu_{V_{\gamma}}-\mu\right)}^4}{{\sigma_{\mathbb{X}}}^4}]d\mu_{V_{\gamma}} \nonumber \\
    = & E_{\mu_{V_{\gamma}}}[\frac{{\left(\mu_{V_{\gamma}}-\mu\right)}^4}{{\sigma_{\mathbb{X}}}^4}]+
    \int_{-\infty}^{+\infty}{f_{\mathfrak{M}}\left(\mu_{V_{\gamma}}\right)\left[3\frac{{\sigma_{V_{\gamma}}}^4}{{\sigma_{\mathbb{X}}}^4}+6\frac{{{\sigma_{V_{\gamma}}}^2}{\left(\mu_{V_{\gamma}}-\mu\right)}^2}{{\sigma_{\mathbb{X}}}^4}\right]}d\mu_{V_{\gamma}} \nonumber \\
    = & 3 + \int_{-\infty}^{+\infty}{f_{\mathfrak{M}}\left(\mu_{V_{\gamma}}\right)\left[ 3 \frac{{\sigma_{V_{\gamma}}}^4}{{\sigma_{\mathbb{X}}}^4} + 6\frac{{{\sigma_{V_{\gamma}}}^2}{\left(\mu_{V_{\gamma}}-\mu\right)}^2}{{\sigma_{\mathbb{X}}}^4}\right]}d\mu_{V_{n}}. \nonumber
\end{align}

If only $\sigma_{V_{\gamma}}^2 > 0$,

\begin{equation*}
\begin{aligned}
\left[ 3 \frac{{\sigma_{V_{\gamma}}}^4}{{\sigma_{\mathbb{X}}}^4} + 6\frac{{{\sigma_{V_{\gamma}}}^2}{\left(\mu_{V_{\gamma}}-\mu\right)}^2}{{\sigma_{\mathbb{X}}}^4}\right] > 0.
\end{aligned}
\end{equation*}

If $\sigma_{V_{\gamma}}^2$ remains constant across $\gamma$, (B5) can be further simplified as

\begin{equation*}
\begin{aligned}
    Kurt\left(\mathbb{X}\right)
    & = 3 + 3\frac{{\sigma_{V_{\gamma}}}^4}{{\sigma_{\mathbb{X}}}^4}+6\frac{{\sigma_{V_{\gamma}}}^2}{{\sigma_{\mathbb{X}}}^4}E_{\mu_{V_\gamma}}\left[\left(\mu_{V_{\gamma}}-\mu\right)^2\right] \\
    & =3 + 3\frac{{\sigma_{V_{\gamma}}}^4}{{\sigma_{\mathbb{X}}}^4}+6\frac{{\sigma_{V_{\gamma}}}^4}{{\sigma_{\mathbb{X}}}^4} \\
    & = 3 + 9\frac{{\sigma_{V_{\gamma}}}^4}{{\sigma_{\mathbb{X}}}^4}.
\end{aligned}
\end{equation*}

Since excess kurtosis is positive, the distribution of $\mathbb{X}$ is heavy-tailed.

\end{proof}
\end{appendix}

\begin{acks}[Acknowledgments]
I warmly thank my postdoctoral supervisor Peng Shige. With lively and engaging account of the evolution of probability theories, alongside his personal experiences (partially outlined in \cite{R15}) in developing BSDE and nonlinear expectation, he encouraged me to think from the origin.
\end{acks}

\begin{funding}
The author was supported by NSFC Grant 11971268, SDST KJHZ026, the talent program Taishan Scholar Young Expert.
\end{funding}



\bibliographystyle{imsart-number} 
\bibliography{ref}       


\end{document}